\setlist[enumerate]{leftmargin=1.2cm}
\newtheorem{observation}{Observation}
\newcommand{\steven}[1]{\textcolor{black}{#1}}
\newcommand{\sknov}[1]{\textcolor{black}{#1}}
\newenvironment{hidden}{}
\newcommand{\vertex}{\node[vertex]}
\tikzstyle{vertex}=[draw, shape=circle, minimum size=0.5em, inner sep=1, fill]
\title{Snakes and Ladders: a Treewidth Story\thanks{
Corresponding author: S. Kelk. R.
Meuwese was supported by the Dutch Research Council (NWO) KLEIN 1 grant \emph{Deep kernelization for phylogenetic discordance}, project number OCENW.KLEIN.305. \sknov{A preliminary version of this article appeared in the
proceedings of WG 2023 (Workshop on Graph-Theoretic Concepts in Computer Science)~\cite{chaplick2023snakesWG} and as an earlier ArXiv upload (February 2023)~\cite{chaplick2023snakes}.} }}
\author{Steven Chaplick\inst{1} \and Steven Kelk\inst{1} \and Ruben Meuwese\inst{1} \and Mat\'u\v{s} Mihal\'ak\inst{1} \and Georgios Stamoulis\inst{1}}
\institute{Dept. Advanced Computing Sciences, Maastricht University, the Netherlands \email{ \{s.chaplick,steven.kelk,r.meuwese,matus.mihalak,\\georgios.stamoulis\}@maastrichtuniversity.nl}}
\authorrunning{S. Chaplick et al.}
\begin{document}

\maketitle

\begin{abstract}
Let $G$ be an undirected graph. We say that $G$ contains a ladder of length $k$ if the $2 \times (k+1)$ grid graph is an induced subgraph of $G$ that is only connected to the rest of $G$ via its four cornerpoints. 
We prove that if all the ladders contained in $G$ are reduced to length 4, the treewidth remains unchanged (and that this bound is tight). 
Our result indicates that, when computing the treewidth of a graph, long ladders can simply be reduced, and that minimal forbidden minors for bounded treewidth graphs cannot contain long ladders. 
Our result also settles an open problem from algorithmic phylogenetics: the common chain reduction rule, used to simplify the comparison of two evolutionary trees, is treewidth-preserving in the display graph of the two trees. 
%
\end{abstract}

\keywords{Treewidth  \and Reduction rules \and Phylogenetics.}

\section{Introduction}
This is a story about treewidth, but it starts in the world of biology. 
A phylogenetic tree on a set of leaf labels $X$
is a binary tree representing the evolution of $X$. These are studied extensively in computational biology~\cite{steel2016phylogeny}. 
Given two such trees a natural aim is to quantify their topological dissimilarity~\cite{john2017shape}. Many such dissimilarity measures have been devised and they are often NP-hard to compute, stimulating the application of techniques from parameterized complexity~\cite{bulteau2019parameterized}.  
Recently there has been a growing focus on treewidth. 
This is because, if one takes two phylogenetic trees on $X$ and identifies leaves with the same label, we obtain an auxiliary graph structure known as the \emph{display graph}~\cite{bryant2006compatibility}. Crucially, the treewidth of this graph is often bounded by a function of the dissimilarity measure that we wish to compute~\cite{kelk2016monadic}. This has led to the use of Courcelle's Theorem within phylogenetics (see e.g. \cite{janssen2018treewidth,kelk2016monadic}) and explicit dynamic programs running over tree decompositions; see \cite{van2022embedding} and references therein. 
In~\cite{kelk2017treewidth} the spin-off question was posed: is the treewidth of the display graph actually a meaningful measure of phylogenetic dissimilarity \emph{in itself} - as opposed to purely being a route to efficient algorithms? 
A closely-related question was whether parameter-preserving reduction rules,  applied to two phylogenetic trees to shrink them in size, also preserve the treewidth of the display graph?
The well-known \emph{subtree reduction rule} is certainly treewidth preserving~\cite{kelk2017treewidth}. 
However, the question remained whether the \emph{common chain reduction rule}~\cite{AllenSteel2001} is treewidth-preserving. 
A common chain is, informally, a sequence of leaf labels $x_1, \ldots, x_k$ that has the same order in both trees.
Concretely, the question arose~\cite{kelk2017treewidth}: is it possible to truncate a common chain to \emph{constant length} such that the treewidth of the display graph is preserved? 
Common chains form ladder-like structures in the display graph, i.e., this question is about how far ladders can be reduced in length without causing the treewidth to decrease.

In this article we answer this question affirmatively, and more generally. 
Namely, we do not restrict ourselves to display graphs, but consider arbitrary graphs. 
A \emph{ladder $L$ of length} $k\geq 1$ of a graph $G$ is a $2 \times (k+1)$ grid graph such that $L$ induces (only) itself and that $L$ is only connected to the rest of the graph by its four cornerpoints. 
First, we prove that a ladder $L$ can be reduced to length 4 without causing the treewidth to decrease, and that this is best possible: reducing to length 3 sometimes causes the treewidth to decrease. 
We also show that if $tw(G) \geq 4$ then reduction to length 3 is safe and,  again,  best possible. 
\sknov{These tight examples are also shown to exist for 
all higher treewidths: for every $t \geq 4$ we can construct a graph $G$ with treewidth $t$ and a ladder of length 3, such that shortening the ladder to length 2 causes the treewidth to decrease to $t-1$.}
Returning to phylogenetics, and thus when $G$ is a display graph, we leverage the extra structure in these graphs to show that common chains can be reduced to 4 leaf labels (and thus the underlying ladder to length 3) without altering the treewidth: this result is thus slightly stronger than on general~$G$.

Our proofs are based on first principles: we directly modify a tree decomposition to get what we need. In doing so we come across the problem that, unless otherwise brought under control, the set of bags that contain a given ladder vertex of $G$ can wind and twist through the tree decomposition in very pathological ways. Getting these \emph{snakes} under control is where much of the hard work and creativity lies, and is the inspiration for the title of this paper. 

From a graph-theoretic perspective our results have the following significance.
First, it is standard folklore that shortening paths (i.e. suppressing vertices of degree 2) is treewidth-preserving, but there is seemingly little in the literature about shortening recursive structures that are slightly more complex than paths, such as ladders. 
(Note that Sanders~\cite{sanders1996linear} did consider ladders, but only for recognizing graphs of treewidth at most 4, and in such a way that the reduction destroys the ladder topology).
Second, our results imply a new safe reduction rule for the computation of treewidth; a survey of other reduction rules for treewidth can be found in \cite{Abu-Khzam2022}. 
Third, we were unable to find sufficiently precise  machinery, characterisations of treewidth or restricted classes of tree decomposition in the literature that would facilitate our results.
Perhaps most closely related to our ladders are the more general \emph{protrusions}: low treewidth subgraphs that ``hang'' from a small boundary~\cite[Ch.~15-16]{kernelization-book_2019}.
There are general (algorithmic) results~\cite{BodlaenderFLPST16} wherein one can safely cut out a protrusion and replace it with a graph of parameter-proportional size instead -- these are based on a problem having \emph{finite integer index}~\cite{BodlaenderF01}. 
Such techniques might plausibly be used to prove that there is \emph{some} constant to which ladders might safely be shortened, but our tight bounds seem out of their reach.
Finally, the results imply that minimal forbidden minors for bounded treewidth cannot have long ladders.

We conclude the article with a number of auxiliary insights and a discussion of possible future work.\\
\\
\emph{Author's note 1}. Compared to the earlier version of this article~\cite{chaplick2023snakesWG,chaplick2023snakes} the present version includes analytical, as opposed to empirical, tightness results (Lemma \ref{lem:alwaystight}) and an extended discussion.\\
\\
\emph{Author's note 2}. At the time of writing, January 2024, we learned that Theorem \ref{thm:main3} was already proven in the article \cite[Theorem 2]{almob2023} (based on the earlier WABI 2022 article \cite{wabi2022}) using different techniques. Our
Theorem \ref{thm:main} can be viewed as a very mild strengthening of  \cite[Theorem 2]{almob2023} in the case that $tw(G) \geq 4$, although the proof of \cite[Theorem 2]{almob2023} could be adapted to this end. Our Theorem \ref{thm:preserve2} strengthens \cite[Theorem 2]{almob2023} to the case of display graphs. Our Lemma \ref{lem:alwaystight}, in which tightness of ladder bounds is proven for all treewidths, is unique to our article.

\section{Preliminaries} 
We follow \cite{kelk2017treewidth} for notation.
A \emph{tree decomposition} of an undirected graph $G=(V,E)$ is a pair $(\mathcal{B}, \mathbb{T})$ where $\mathcal{B} = \{B_1, \dots ,B_q\}$, $B_i \subseteq V(G)$, is a multiset of \emph{bags}
and $\mathbb{T}$ is a tree with $q$ nodes, which are in bijection with $\mathcal{B}$, and satisfy the following three properties
\begin{enumerate}
    \item[(tw1)] $\cup_{i=1}^q B_i = V(G)$;
    \item[(tw2)] $\forall e = \{ u,v \} \in E(G), \exists B_i \in \mathcal{B} \mbox{ s.t. } \{u,v\} \subseteq B_i$;
    \item[(tw3)] $\forall v \in V(G)$, all the bags $B_i$ that contain $v$ form a connected subtree of~$\mathbb{T}$.
\end{enumerate}

The \emph{width} of $(\mathcal{B}, \mathbb{T})$ is equal to $\max_{i=1}^q |B_i|-1$. The \emph{treewidth} of $G$, denoted $tw(G)$, is the smallest width among all tree decompositions of $G$.
Given a tree decomposition $\mathbb{T}$ of a graph $G$, we denote by $V(\mathbb{T})$ the (multi)set of its bags and
by $E(\mathbb{T})$ the set of its edges. Property (tw3) is also known as \emph{running intersection property}.
%
Without loss of generality, we consider only connected graphs $G$.

Note that subdividing an edge $\{u,v\}$ of $G$ with a new degree-2 vertex $uv$ does not change the treewidth of $G$. 
In the other direction, suppression of degree-2 vertices is also treewidth preserving \emph{unless} it causes the only cycle in a graph to disappear (e.g. if $G$ is a triangle); unlike \cite{kelk2017treewidth} we will never encounter this boundary case. 
An equivalent definition of treewidth is based on chordal graphs. Recall that a graph $G$ is chordal if every induced cycle in $G$ has exactly three vertices. The treewidth of $G$ is the minimum, ranging over \emph{all} chordal completions $c(G)$ of $G$ (we add edges until $G$ becomes chordal), of the  size of the maximum clique
in $c(G)$ minus one. Under this definition, each bag of a tree decomposition of $G$ naturally corresponds to a maximal clique in a chordal completion of $G$ \cite{Blair1993}.



We say that a graph $H$ is a \emph{minor} of another graph $G$ if $H$ can be obtained 
from $G$ by deleting edges and vertices and by contracting edges.

A \emph{ladder $L$ of length $k \geq 1$} is a $2 \times (k+1)$ grid graph. A \emph{square} of $L$ is a set of vertices of $L$ that induce a 4-cycle in $L$. We call the endpoints of $L$, i.e., the degree-2 vertices of $L$, the \emph{cornerpoints} of $L$.
%
We say that a graph $G$ \emph{contains} $L$ if the following holds (see Fig.~\ref{fig:ladder} for illustration):
\begin{enumerate}
    \item The subgraph induced by vertices of $L$ is $L$ itself.
    \item Only cornerpoints of $L$ can be incident to an edge with an endpoint outside~$L$.
\end{enumerate}

Observe that a ladder of length $k$ is a minor of the ladder of length $(k+1)$. Treewidth is non-increasing under the action of taking minors, so reducing the length of a ladder in a graph cannot increase the treewidth of the graph.

Suppose $G$ contains a ladder $L$. We say that $L$ \emph{disconnects} $G$ if $L$ contains a square $\{u,v,w,x\}$ such
that the two horizontal edges of the square (following Fig.~\ref{fig:ladder}, these are the edges
$\{u,w\}$ and $\{v,x\}$) form an edge cut of the entire graph $G$.
Note that a square of $L$ has this property if and only if all squares of $L$ do.
Also, if we reduce the length of a ladder $L$ to obtain a shorter ladder $L'$, $L'$ disconnects $G$ if and only if $L$ does.
We recall a number of results from Section 5.2 of \cite{kelk2017treewidth}; these will form the starting point for our work.
\begin{lemma}[\cite{kelk2017treewidth}]
\label{lem:disconnecting}
 Suppose $G$ contains a disconnecting ladder $L$. 
 The ladder $L$ can be increased arbitrarily in length without increasing the treewidth of $G$.
\end{lemma}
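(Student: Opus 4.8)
The direction $tw(G') \ge tw(G)$, where $G'$ denotes the graph with the lengthened ladder, is free: the original ladder is a minor of the longer one, so $G$ is a minor of $G'$, and treewidth is minor-monotone (as already observed in the excerpt). Hence the entire content of the lemma is the reverse inequality, namely that lengthening cannot \emph{increase} the treewidth. The plan is to construct, for any target length $m$, a tree decomposition of $G'$ of width at most $tw(G)$.

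First I would use the disconnecting hypothesis to split $G$ cleanly. Writing the left cornerpoints as $\{p,q\}$ and the right cornerpoints as $\{p',q'\}$, the fact that the horizontal edges of every square form an edge cut means that deleting the horizontal edges of the leftmost square peels off a part $G_{\ell}$ meeting the ladder exactly in $\{p,q\}$, and symmetrically the rightmost square peels off a part $G_{r}$ meeting the ladder exactly in $\{p',q'\}$. Since the ladder separates these two outside parts (were they joined outside the ladder, no square's horizontal edges could be a cut), $G$ is the union of $G_\ell$, the ladder $L$, and $G_r$, glued only at the cornerpoints. I would also note $tw(G) \ge 2$, because $G$ contains a square, i.e. a $4$-cycle.

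Next I would take an optimal tree decomposition $(\mathcal{B},\mathbb{T})$ of $G$ and restrict it to each side: intersecting every bag with $V(G_\ell)$ gives a tree decomposition of $G_\ell$ of width at most $tw(G)$, and similarly for $G_r$. Because $\{p,q\}$ and $\{p',q'\}$ are edges, property (tw2) supplies a bag $\beta_\ell \supseteq \{p,q\}$ in the first and a bag $\beta_r \supseteq \{p',q'\}$ in the second. Independently, a ladder of any length $m$ has a width-$2$ path decomposition (the standard zig-zag with bags of size $3$) whose first bag contains both left cornerpoints and whose last bag contains both right cornerpoints. I would then glue the three decompositions by joining $\beta_\ell$ to the first ladder bag and $\beta_r$ to the last ladder bag; the resulting width is $\max\bigl(tw(G),2\bigr)=tw(G)$, and since $m$ is arbitrary this yields a ladder of any length with no increase in treewidth.

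The routine part is checking properties (tw1) and (tw2), which hold piecewise. The one point needing care is (tw3): each cornerpoint must still occupy a subtree of the glued tree. This holds because, say, $p$ occupies a connected subtree of the restricted left decomposition that contains $\beta_\ell$, occupies only the first ladder bag within the path, and that bag is made adjacent to $\beta_\ell$; meanwhile the interior ladder vertices and the vertices of $G_\ell$ and $G_r$ live in disjoint regions of the glued tree, so no conflict arises. I expect the main obstacle to be the bookkeeping that justifies the clean three-way split from the edge-cut hypothesis together with the existence of the common-cornerpoint bags $\beta_\ell,\beta_r$; everything after that is straightforward verification.
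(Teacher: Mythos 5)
Your proof is correct. Note that there is no in-paper argument to compare against: the paper states Lemma~\ref{lem:disconnecting} without proof, importing it from \cite{kelk2017treewidth}; your split-and-glue construction is the natural self-contained proof of the cited fact. The key steps all check out: the disconnecting hypothesis does give the clean three-way split into $G_\ell$, the ladder, and $G_r$ (every external vertex first meets the ladder at a cornerpoint, and no external path can join the two sides, since otherwise no square's horizontal edges could be an edge cut of $G$); the rung edges $\{p,q\}$ and $\{p',q'\}$ guarantee via (tw2) that the bags $\beta_\ell$ and $\beta_r$ exist in the restricted decompositions; the observation $tw(G)\ge 2$ (from the $4$-cycle) absorbs the width-$2$ ladder bags so the glued decomposition has width exactly $\max\bigl(tw(G),2\bigr)=tw(G)$; and the running intersection property survives the two gluing edges exactly as you argue, since each cornerpoint's bags form a subtree containing the relevant $\beta$-bag plus a contiguous prefix (or suffix) of the ladder path attached to it.
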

For the more general 
case, the following weaker result is known.
\begin{lemma}[\cite{kelk2017treewidth}]
\label{lem:protoplusone}
Suppose $G$ has $tw(G) \geq 3$ and contains a ladder. 
If the ladder is increased arbitrarily in length, the treewidth of $G$ increases by at most one.
\end{lemma}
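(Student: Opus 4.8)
The plan is to start from an optimal tree decomposition of $G$, of width $w := tw(G) \ge 3$, and to construct from it a tree decomposition of the lengthened graph $G'$ of width at most $w+1$. Write $S = \{p,q,r,s\}$ for the four cornerpoints (with $p,q$ the left and $r,s$ the right ends), and let $L'$ denote the lengthened ladder. Since the ladder meets the rest of the graph only through $S$, the set $S$ separates $G'$ into the ``outside'' part $G_{\mathrm{out}}$ (all of $G$ minus the internal ladder vertices) and the ladder $L'$, with $G_{\mathrm{out}}\cap L' = S$ and no edges across. Hence $G'$ is a subgraph of the clique-sum of $G_{\mathrm{out}}+K_S$ and $L'+K_S$ glued along $S$ (here $H+K_S$ means $H$ with all edges inside $S$ added), and since the treewidth of a clique-sum equals the maximum of the treewidths of its parts,
\[
 tw(G') \;\le\; \max\bigl( tw(G_{\mathrm{out}}+K_S),\; tw(L'+K_S) \bigr).
\]
It therefore suffices to bound each term by $w+1$.

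For the ladder term I would exhibit an explicit decomposition of $L'+K_S$ of width $4$: take a standard width-$2$ decomposition of the ladder and add the two left cornerpoints $p,q$ to every bag. The bags then have size at most $5$; the carried pair $p,q$ meets $r,s$ in the final bag, so every edge of $K_S$ is covered, and this holds regardless of the ladder's length. This is precisely where the hypothesis $w \ge 3$ enters, since it guarantees $4 \le w+1$. In fact the bound is tight ($tw(L'+K_S)=4$ for long ladders: a single bag must hold the $4$-clique $S$ yet cannot simultaneously carry the rung separator needed to join the two halves of the ladder), so $w \ge 3$ is the natural threshold for this approach.

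The outside term is the crux, and I would handle it by first shortening the ladder all the way to length $1$. By the earlier observation that reducing a ladder's length is a minor operation and hence cannot increase treewidth, the resulting graph $G_1$ satisfies $tw(G_1)\le w$. A direct check gives $G_1 = G_{\mathrm{out}} + \{pr,qs\}$ (the end rungs $pq,rs$ already lie in $G_{\mathrm{out}}$), so that $G_{\mathrm{out}}+K_S = G_1 + \{ps,qr\}$: we must add only the two \emph{diagonals} of the corner square and show this costs at most one unit of width. The main obstacle is that naively forcing all four cornerpoints into a common bag requires carrying two vertices along a tree path, threatening a $+2$ blow-up; so the heart of the matter is to save one of these units.

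The idea that saves the second unit is chordality. Fix an optimal tree decomposition of $G_1$ and consider the four subtrees $T_p,T_q,T_r,T_s$ of bags containing each cornerpoint. Because $S$ induces the $4$-cycle $p\,r\,s\,q$ in $G_1$, the pairs $pr,rs,sq,qp$ all appear as edges of the intersection graph of these subtrees. Intersection graphs of subtrees of a tree are chordal and cannot contain an induced $4$-cycle, so this intersection graph must also contain a diagonal, i.e.\ one of the pairs $\{p,s\}$ or $\{q,r\}$ already shares a bag. That diagonal edge can then be added for free, and only the remaining diagonal must be inserted, at the cost of carrying a single vertex along a tree path -- an increase of at most one. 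This yields $tw(G_{\mathrm{out}}+K_S)=tw(G_1+\{ps,qr\})\le w+1$ (note this part needs no lower bound on $w$), and combining with the ladder term gives $tw(G')\le \max(w+1,4)=w+1$, as required. The chordality step is the delicate one, and it is a first, mild instance of the real difficulty -- controlling how the cornerpoint bags weave through the decomposition -- that has to be confronted in full to push the bound down to $+0$.
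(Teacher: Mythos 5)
Your proof is correct, but there is nothing in-document to compare it against: the paper imports this lemma verbatim from \cite{kelk2017treewidth} (Section 5.2) and gives no proof, so your argument is necessarily an independent, self-contained route. Its three ingredients all check out. First, since the lengthened ladder $L'$ meets the rest of $G'$ only at the cornerpoints $S$, the clique-sum bound $tw(G') \le \max\bigl(tw(G_{\mathrm{out}}+K_S),\, tw(L'+K_S)\bigr)$ is valid. Second, the width-4 decomposition of $L'+K_S$ obtained by carrying $p,q$ through a standard width-2 ladder decomposition is correct and independent of the ladder's length, and this is indeed the only place the hypothesis $tw(G)\ge 3$ is needed. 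Third, the identity $G_{\mathrm{out}}+K_S = G_1 + \{ps,qr\}$, where $G_1$ is the length-1 shortening (a minor of $G$, hence $tw(G_1)\le tw(G)$), is right, and the chordality step is sound: the intersection graph of the subtrees $T_p,T_q,T_r,T_s$ is chordal, so the 4-cycle formed by the covered edges $pq,qs,sr,rp$ cannot be induced, one diagonal already shares a bag for free, and the remaining diagonal costs at most one unit by carrying a single vertex along the tree path between the two relevant subtrees (state explicitly that if those subtrees already intersect, no modification is needed at all). Notably, this chordality device is exactly the tool the paper itself uses later, after Case 2 of Theorem \ref{thm:main}, where it argues that any chordalization must add the diagonal $\{w,v\}$ or $\{u,x\}$; so your proof is very much in the paper's spirit while proving a result the paper treats as a black box. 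One cosmetic remark: your parenthetical tightness claim $tw(L'+K_S)=4$ is true (contracting the internal top and bottom paths and then the edge between them produces a $K_5$ minor), but your one-line justification for it is hand-wavy, and the claim plays no role in the proof, so it is best either justified properly or dropped.
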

We now make the following (new) observation.
\begin{observation}
\label{obs:k4}
Suppose $G$ contains a ladder $L$ of length $2$ or longer. If $L$ is not disconnecting, then $tw(G) \geq 3$.
\end{observation}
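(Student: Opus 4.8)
The plan is to show that a non-disconnecting ladder of length at least $2$ forces a $K_4$ minor; since treewidth is minor-monotone and $tw(K_4)=3$, this immediately gives $tw(G)\ge 3$. I write the ladder as a top path $u_0,\dots,u_k$ and a bottom path $v_0,\dots,v_k$ joined by rungs $\{u_i,v_i\}$, so that the cornerpoints are $u_0,v_0,u_k,v_k$.

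First I would extract an external connection from the hypothesis that $L$ does not disconnect $G$. Fix any square and delete its two horizontal edges; this splits $L$ into a ``left'' piece $L_\ell$ containing $u_0,v_0$ and a ``right'' piece $L_r$ containing $u_k,v_k$, and by assumption $G$ stays connected. Hence there is a path in $G$ from $L_\ell$ to $L_r$ avoiding the deleted edges. Letting $s$ be the last vertex of this path in $L_\ell$ and $t$ the first subsequent vertex in $L_r$, the subpath $Q$ from $s$ to $t$ has all internal vertices outside $L$ (every vertex of $L$ lies in $L_\ell$ or $L_r$). Moreover, because $L$ meets the rest of $G$ only at its cornerpoints, and the only internal $L_\ell$--$L_r$ edges were the two we deleted, both $s$ and $t$ must be cornerpoints, so $s\in\{u_0,v_0\}$ and $t\in\{u_k,v_k\}$. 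Contracting the interior of $Q$ turns it into a single edge $\{s,t\}$, so it suffices to find a $K_4$ minor in $L$ together with this one extra edge.

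By the top--bottom symmetry of $L$ there are then only two cases: the \emph{straight} case $\{s,t\}=\{u_0,u_k\}$ and the \emph{crossed} case $\{s,t\}=\{u_0,v_k\}$. In the straight case I would take the branch sets $\{u_0\}$, $\{u_k\}$, $\{u_1,\dots,u_{k-1}\}$, $\{v_0,\dots,v_k\}$: the first two are joined by the new edge, the consecutive top vertices give the remaining top adjacencies, and the rungs $\{u_0,v_0\}$, $\{u_k,v_k\}$, $\{u_1,v_1\}$ join each of the first three sets to the bottom set. In the crossed case the sets $\{u_0\}$, $\{v_k\}$, $\{u_1,\dots,u_k\}$, $\{v_0,\dots,v_{k-1}\}$ work in exactly the same way, again with a middle rung $\{u_1,v_1\}$ supplying the final adjacency. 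Both give $K_4$, completing the argument.

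The single place where the hypothesis $k\ge 2$ enters is precisely the existence of an interior rung $\{u_1,v_1\}$ (i.e.\ a column $i$ with $1\le i\le k-1$) joining the two ``middle'' branch sets; for $k=1$ no such rung exists and a length-$1$ ladder with one extra corner edge has treewidth only $2$, so the bound genuinely needs $k\ge 2$. I expect the main subtlety to be the clean extraction of the path $Q$ in the second step---verifying that its endpoints are genuinely cornerpoints and its interior avoids $L$---together with confirming that the straight and crossed cases are exhaustive up to symmetry; the two $K_4$ constructions themselves are then routine.
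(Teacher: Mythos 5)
Your proof is correct and follows essentially the same route as the paper's: both extract from the non-disconnecting hypothesis an external path joining cornerpoints at opposite ends of the ladder, use it to exhibit a $K_4$ minor, and conclude via minor-monotonicity (equivalently, $K_4$ being the forbidden minor for treewidth at most $2$). The only real difference is in the choice of branch sets --- the paper contracts each end rung (i.e.\ uses $\{a,b\}$ and $\{w,x\}$ as branch sets, with the two vertices of the middle rung as the other two), which renders your straight/crossed case distinction unnecessary since it no longer matters which cornerpoints the external path attaches to.
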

\begin{proof}
Let $a,b,u,v,w,x$ be the six vertices in the ladder $L$ with edges as shown in Fig.~\ref{fig:ladder}. Observe that $G$ contains a $K_4$ minor. Specifically,
take $\{a,b\}, u, v$ and $\{w,x\}$ as the four corners of the minor pre-contraction. The fact that $L$ is not disconnecting means that there is a path from $\{a,b\}$ to $\{w,x\}$ that leaves the ladder at one end and re-enters it via the other, inducing the minor edge between $\{a,b\}$ and $\{w,x\}$. $K_4$ is the (unique) forbidden minor for graphs of treewidth at most 2, so $tw(G)>2$.
\end{proof}

We can leverage Observation \ref{obs:k4} to reformulate Lemma \ref{lem:protoplusone} without the $tw(G) \geq 3$ assumption. However it then only applies to ladders of size at least two.
\begin{lemma}
\label{lem:plusone}
Suppose $G$ contains a ladder $L$ with length at least 2. 
If 
$L$ is increased arbitrarily in length, the treewidth of the graph increases by at most one.
\end{lemma}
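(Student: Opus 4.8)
The plan is a short case analysis on whether the ladder $L$ is disconnecting, invoking the three results recalled above. The key point driving the dichotomy is that the hypothesis ``length at least 2'' is precisely what is needed to let Observation~\ref{obs:k4} supply the treewidth lower bound that Lemma~\ref{lem:protoplusone} requires.

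First I would suppose that $L$ is disconnecting. In this case Lemma~\ref{lem:disconnecting} applies directly: lengthening $L$ does not increase the treewidth of $G$ at all, which is certainly an increase of at most one. Here one should recall, as noted in the preliminaries, that whether a ladder disconnects $G$ is invariant under changing its length, so the hypothesis of Lemma~\ref{lem:disconnecting} remains valid after the enlargement.

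Second I would suppose that $L$ is not disconnecting. Since $L$ has length at least 2, Observation~\ref{obs:k4} yields $tw(G) \geq 3$. With this lower bound in hand, Lemma~\ref{lem:protoplusone} applies and guarantees that arbitrarily increasing the length of the ladder raises the treewidth of $G$ by at most one. In both cases the treewidth increases by at most one, which is exactly the claim.

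I do not expect a genuine obstacle here: the entire content is the observation that the two previously known lemmas cover precisely the two sides of the disconnecting / non-disconnecting split, and that Observation~\ref{obs:k4} bridges the gap in the non-disconnecting case exactly when the ladder is long enough to rule out the $K_4$-free (i.e. treewidth $\leq 2$) regime. The only point requiring mild care is confirming that the disconnecting status, and hence which case applies, is unaffected by the lengthening operation, so that the chosen case remains valid both before and after the ladder is enlarged; this is immediate from the remark in the preliminaries that $L'$ disconnects $G$ if and only if $L$ does.
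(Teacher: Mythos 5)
Your proposal is correct and matches the paper's own proof essentially verbatim: both split on whether $L$ is disconnecting, using Lemma~\ref{lem:disconnecting} in the disconnecting case and Observation~\ref{obs:k4} followed by Lemma~\ref{lem:protoplusone} in the non-disconnecting case. Your extra remark that the disconnecting property is invariant under lengthening is a sensible (if implicit in the paper) sanity check, but nothing more needs to be said.
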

\begin{hidden}
\begin{proof}
If $L$ is disconnecting then Lemma \ref{lem:disconnecting} applies. If $L$ is not disconnecting, then by Observation \ref{obs:k4} we have $tw(G) \geq 3$ and Lemma \ref{lem:protoplusone} applies.
\end{proof}
\end{hidden}
If we start from a sufficiently long ladder, can the ladder be increased in length without increasing the treewidth? Past research has the following partial result. 

\begin{theorem}[\cite{kelk2017treewidth}] 
\label{thm:unbounded}
Let $G$ be a graph with $tw(G)=k$. There is a value $f(k)$ such that if 
$G$ contains a ladder of length $f(k)$ or longer, 
the ladder can be increased in length arbitrarily without altering (in particular: increasing) the treewidth. 
\end{theorem}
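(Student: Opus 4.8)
The plan is to construct, for a sufficiently long ladder, a width-$k$ tree decomposition of the lengthened graph. Since lengthening a ladder only adds vertices and keeps $G$ as a minor (as observed above, a shorter ladder is a minor of a longer one), the lengthened graph $G'$ always satisfies $tw(G') \ge tw(G) = k$; hence it suffices to build a tree decomposition of $G'$ of width exactly $k$. First I would dispose of the easy case and reduce the goal. If $L$ is disconnecting, Lemma~\ref{lem:disconnecting} already gives the claim with $f(k)=1$, so assume $L$ is non-disconnecting; then by Observation~\ref{obs:k4} we have $k = tw(G) \ge 3$, giving enough room to accommodate ladder squares (which only need width $3$). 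I would also reduce ``arbitrary length'' to ``lengthen by some fixed amount $d>0$'': once we can increase the length by $d$ while keeping treewidth $k$, iterating attains all lengths $\ell, \ell+d, \ell+2d, \dots$, and since shortening a ladder is a minor operation (hence treewidth non-increasing) any target length $\ge \ell$ is reached by overshooting and shortening back.

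The core is a pumping argument along the ladder. Label the rungs $R_0, \dots, R_\ell$ with $R_i = \{t_i, b_i\}$. Starting from an optimal width-$k$ tree decomposition $(\mathcal{B}, \mathbb{T})$, for each internal rung I would locate a single edge $e_i$ of $\mathbb{T}$ whose removal separates the bags carrying the part of the ladder left of $R_i$ from those carrying the part on the right; its separator $Z_i$ has size $\le k+1$, and since $t_i, b_i$ are the only ladder vertices adjacent to both sides, after normalizing the decomposition one may take $e_i$ so that $Z_i \cap V(L) = R_i$. These edges cut $\mathbb{T}$ into consecutive ``slabs''. To each rung I attach a bounded-size signature recording how its slab meets the ladder together with the combinatorial pattern of the size-$\le(k+1)$ separators around it. As this data ranges over a set whose cardinality depends only on $k$, taking $f(k)$ larger than the number of signatures forces, by pigeonhole, two internal rungs $i<j$ with identical signatures. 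The slab between $e_i$ and $e_j$ is then repeatable: splicing in $r$ copies and identifying boundary separators across copies yields a valid width-$k$ tree decomposition in which exactly the ladder segment between rungs $i$ and $j$ is duplicated, lengthening the ladder by $d = j-i$ per copy.

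The main obstacle is precisely the phenomenon the paper calls \emph{snakes}: guaranteeing that the slab we repeat contains, as non-boundary vertices, \emph{only} ladder vertices. An external vertex whose subtree of bags lies strictly inside the slab would be duplicated by the pumping and thereby corrupt the fixed external part of $G$. Because the external graph attaches only at the four cornerpoints and is untouched by lengthening, its interface across any interior cut is in principle bounded, but the subtrees $S_x$ of external vertices can still wind deep into the ladder region in pathological ways. Bringing these under control --- arranging, by re-routing the decomposition and a finer pigeonhole, that the matched boundary separators $Z_i, Z_j$ are isomorphic as interfaces and that every external vertex meeting the slab lies in $Z_i \cap Z_j$, so that it merely passes through and is not duplicated --- is where the real work lies and is what governs the growth of $f(k)$. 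Once the interfaces match, verifying the three tree-decomposition axioms (tw1)--(tw3) for the spliced object is routine, and minor-shortening then delivers every length $\ge \ell$.
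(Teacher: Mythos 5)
Note first that the paper does not prove this statement itself; Theorem~\ref{thm:unbounded} is quoted from \cite{kelk2017treewidth}, so your proposal can only be judged on its own merits. Your skeleton (reduce to the non-disconnecting case via Lemma~\ref{lem:disconnecting} and Observation~\ref{obs:k4}, then pump: cut the tree decomposition into slabs along the ladder, pigeonhole on bounded-size signatures, splice in copies of a repeated slab, and recover exact lengths by minor-shortening) is the natural strategy for a bound $f(k)$ depending on $k$, and the outer reductions are sound. The problem is that your third paragraph is not a proof step: it names the central difficulty --- external vertices whose bag-subtrees wind into the slab you want to repeat --- and then asserts it can be handled ``by re-routing the decomposition and a finer pigeonhole.'' That assertion is the entire theorem. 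Without it the splice is simply invalid: duplicating a slab duplicates every external vertex whose subtree of bags lies strictly inside it (so the spliced object is not a tree decomposition of $G'$ but of some other graph), and it disconnects the bag-subtree of any external vertex that traverses the slab without lying in \emph{both} boundary separators, violating (tw3). Since external vertices attach only at the four cornerpoints, their subtrees carry no edges of the slab's interior, but nothing in your argument prevents them from being there, and deleting them from the copies breaks connectivity of their subtrees elsewhere.

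There is also a concrete flaw in the pigeonhole count as stated. For the splice to identify boundary separators across copies, the matched cuts must agree on their \emph{actual} external vertices, i.e.\ $Z_i \setminus V(L) = Z_j \setminus V(L)$ as vertex sets, not merely ``isomorphic as interfaces''; the number of candidate vertex sets is not bounded by any function of $k$, so ``signatures whose cardinality depends only on $k$'' do not by themselves force a usable repetition. To repair this one needs genuine additional structure: for instance, that the cut edges $e_i$ can be chosen nested along a single path of $\mathbb{T}$ (so that each external vertex occupies a consecutive interval of the separators $Z_1,\dots,Z_{\ell-1}$, by running intersection), followed by a counting argument showing that a sufficiently long ladder forces two cuts $i<j$ with identical external content \emph{and} no external vertex strictly inside the intervening slab. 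Likewise, the existence of cut edges $e_i$ with $Z_i \cap V(L) = R_i$ ``after normalizing'' is asserted rather than proved; when $L$ is non-disconnecting, a rung is not a separator of $G$, so these cuts necessarily contain external vertices and their existence and placement need an argument. In short: the architecture is right, but the load-bearing steps --- exactly the ``snake-taming'' that the present paper identifies as where the hard work lies --- are missing, so this is a plan, not a proof.
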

Ideally we would like a single, universal value \emph{that does not depend
on} $k$. In this article we will show that such a single, universal constant does exist.
%
%
\section{Results}
We first consider graphs of treewidth at least 4; we later remove this restriction.
\begin{theorem}
\label{thm:main}
Let $G$ be a graph with $tw(G)\geq 4$. If $G$ has a ladder $L$ of length 3 or higher, the ladder can
be lengthened arbitrarily without changing  
the treewidth.
\end{theorem}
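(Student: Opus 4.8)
The plan is to show that lengthening cannot raise the treewidth. Since the shorter ladder is a minor of the longer one, lengthening never \emph{decreases} $tw$, and by Lemma~\ref{lem:plusone} it increases $tw$ by at most one; hence it suffices to rule out this $+1$, i.e.\ to exhibit for the lengthened graph $G'$ a tree decomposition of width exactly $k := tw(G)$. I would build such a decomposition by surgery on an \emph{optimal} width-$k$ decomposition of $G$. First, if $L$ is disconnecting, Lemma~\ref{lem:disconnecting} already gives the claim, so I assume $L$ is non-disconnecting; by Observation~\ref{obs:k4} this is consistent with $tw(G)\ge 4$, and it supplies a ``return path'' in $G-L$ joining the two ends of the ladder, which is precisely the source of the difficulty below. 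Writing the columns as $0,1,\dots,m$ with $m\ge 3$, I would reduce the task to a single insertion: it is enough to insert one new column between two fixed \emph{interior} columns (say between columns $1$ and $2$, neither a cornerpoint) without raising the width, since each intermediate graph again has a length-$\ge 3$ ladder and treewidth $\ge 4$, so the argument iterates to any length.

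For the single insertion I would start from an optimal width-$k$ decomposition $(\mathcal B,\mathbb T)$, where for each ladder vertex $v$ the bags containing $v$ form a subtree $T_v$ of $\mathbb T$. The plan is to first bring $(\mathcal B,\mathbb T)$ into a normal form that \emph{localizes} a square of the ladder: concretely, to produce a single bag $B$ playing the role of the square $\{a_1,b_1,a_2,b_2\}$, whose two incident subtrees carry the left and right continuations of the ladder across separators that contain at most one further vertex. Given such a localized, low-separator site I would perform local surgery, replacing $B$ by a two-bag caterpillar $\{a_1,b_1,a_c,b_c\}$--$\{a_c,b_c,a_2,b_2\}$ that realizes the new rung $a_cb_c$ and the four new horizontal edges while dropping the now-suppressed edges $a_1a_2,b_1b_2$. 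Because $tw(G)\ge 4$, the budget $k+1\ge 5$ leaves exactly enough slack for each new four-vertex bag to also carry the at-most-one separator vertex needed to preserve running intersection, so (tw1)--(tw3) hold for $G'$ at width $k$; iterating inserts arbitrarily many columns.

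The main obstacle is the normalization step --- taming the \emph{snakes}. In an arbitrary optimal decomposition the bags meeting $L$ need not line up as a ladder-shaped caterpillar: because $L$ is non-disconnecting, the return path forces the decomposition to route external material around (or through) the ladder region, so a single column's subtree can twist about and distinct columns' subtrees can overlap badly. A priori, then, no bag holds a whole square and there is no cheap insertion site; worse, the naive fix of dumping a square together with the new column into one bag needs a size-$6$ bag, which is forbidden once $k$ is as small as $4$. All the work lies in rerouting into a normal form with a localized square whose incident separators are small enough to absorb the two new vertices within the budget $k+1$, and this is exactly where the two hypotheses enter: the single slot of slack from $tw(G)\ge 4$, and the essentially single ``flow'' structure of the return path, which bounds how badly the snakes can wind. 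The need for two \emph{interior} columns as cornerpoint-free buffers on either side of the insertion is what makes length $3$, rather than $2$, the right threshold.
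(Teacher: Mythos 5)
Your proposal gets the framing right (reduce to a single rung insertion, iterate, operate by surgery on an optimal width-$k$ decomposition) and it correctly names the difficulty, but it does not contain a proof: the entire mathematical content is the ``normalization'' step that you explicitly defer, namely the claim that every optimal decomposition can be rerouted into one containing a localized bag $B$ holding a whole square of $L$, flanked by separators with at most one further vertex. You give no argument for this, and the paper's proof is evidence that no such clean normal form exists. In the paper, the situation where some bag contains a full square is only the easiest case (Case~1, where two pendant size-5 bags suffice because $tw(G)\geq 4$). When no bag contains a square, and moreover no bag meets the top row in two vertices and the bottom row in two vertices (Case~2, handled by contracting the ladder to a minor and re-applying Case~1), the paper cannot localize anything: it can only invoke chordality to find two \emph{distinct} bags $B_1 \supseteq \{u,w,v\}$ and $B_2 \supseteq \{v,w,x\}$, and it must then make a \emph{global} choice of decomposition --- one minimizing the tree-distance between such $B_1$ and $B_2$ --- before running a long case analysis (the ``reeling in the snakes'' relabelling construction, plus careful tracking of which side of the decomposition each cornerpoint $a,b,c,d$ can inhabit). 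Crucially, the hardest branch of that analysis does not end with a constructed insertion site at all; it ends with a \emph{contradiction} against the global distance-minimality of the chosen decomposition. This is structurally different from, and much weaker than, the local rerouting you posit: the paper never shows your normal form is reachable, and its detour through minors, relabellings, and a minimality argument strongly suggests it is not reachable by local surgery.

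Two further points in your accounting are unjustified. First, the budget claim that ``each new four-vertex bag also carries the at-most-one separator vertex needed to preserve running intersection'' presumes that the adhesions (bag intersections) adjacent to your insertion site have size at most one; in an optimal decomposition adhesions can have size up to $k$, and nothing in your setup bounds them. Second, the role of $tw(G)\geq 4$ in the paper is not to provide ``one slot of slack'' at a small separator: it is used to ensure that the freshly created size-5 bags of Cases~1 and~2 do not exceed the existing width, and (in Case~2) to handle the subcase where contraction drops the width below 4. So the two hypotheses enter the real proof in places quite different from where your sketch spends them; as it stands, your proposal is a correct statement of the problem plus the easy reductions, with the theorem itself left unproved.
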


\begin{proof}
Due to Lemma \ref{lem:disconnecting} we 
can assume that $L$ is not disconnecting.  Our general strategy is to show that if $G$ contains the ladder $L$ shown in Fig.~\ref{fig:ladder}, we can insert an extra `rung' in the ladder without increasing the treewidth, thus obtaining a ladder with one extra square (see Fig.~\ref{fig:ladder2}).
The extension of the ladder by one square can then be iterated to obtain an arbitrary length ladder. 
\begin{figure}[t]
\begin{minipage}[b]{0.44\textwidth}
    \centering
    \includegraphics[scale=1.0]{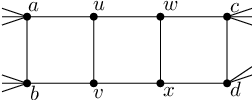}
    \caption{A ladder $L$ of length 3 with corner points $a,b,c,d$. 
    }
    \label{fig:ladder} 
\end{minipage}
\hfill
\begin{minipage}[b]{0.48\textwidth}
    \centering
    \includegraphics[scale=1.0]{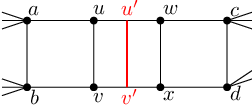}
    \caption{Inserting a new edge $\{u',v'\}$ into ladder $L$ results in ladder $L'$ of length 4.}
    \label{fig:ladder2} 
\end{minipage}
\end{figure}

Let $L$ be the ladder shown in Fig.~\ref{fig:ladder}, and assume that $G$ contains $L$. 
Let $(\mathcal{B}, \mathbb{T})$ be a minimum-width tree decomposition for $G$.
We proceed with a case analysis. The cases are cumulative: we will assume that earlier cases do not hold.

\smallskip

\noindent
\textbf{Case 1.} \textbf{Suppose that $\mathcal{B}$ contains a bag $B$ such that all four vertices from one of the squares of $L$ are in $B$.} 
Let $\{u,v,w,x\}$, say, be the square of $L$ contained in bag $B$, where the position of the vertices is as in Fig.~\ref{fig:ladder}. We prolong the ladder as in Fig.~\ref{fig:ladder2} and create a valid tree decomposition for the new graph as follows:
we introduce a new size-5 bag $B'=\{u',u,v,w,x\}$ which we attach pendant to $B$ in the tree decomposition, and a new size-5 bag $B''=\{u',v',v,w,x\}$ which we attach pendant to $B'$. Observe that this is a valid tree decomposition for the new graph. Due to the fact that $tw(G) \geq 4$, the treewidth does not increase, and the statement follows. 
Note that in this construction $B''$ contains all four of $\{u',w,v',x\}$, which is a square of the new ladder, so the construction can be applied iteratively many times as desired to produce a ladder of arbitrary length.


\smallskip

\noindent
\textbf{Case 2.} \textbf{Suppose that $\mathcal{B}$ contains
a bag $B$ such that $|B \cap \{a,u,w,c\}| \geq 2$ and  $|B \cap \{b,v,x,d\}| \geq 2$.}
Let $h_1, h_2$ be two distinct vertices
from $B \cap \{a,u,w,c\}$ and $l_1, l_2$ be two distinct vertices
from $B \cap \{b,v,x,d\}$.

Observe that it is possible to partition the sequence $a, u, w, c$ into two disjoint intervals $H_1, H_2$, and the sequence $b,v,x,d$ into two disjoint intervals $L_1, L_2$ such that $h_1 \in H_1$, $h_2 \in H_2$, $l_1 \in L_1$ and $l_2 \in L_2$. If we contract the edges and vertices in each of $H_1, H_2, L_1, L_2$ we obtain a new graph $G'$ which is a minor of $G$. Note that $G'$ is similar to $G$ except that the ladder now has two fewer squares -- the three original squares have been replaced by a square whose corners correspond to $H_1, H_2, L_1, L_2$. This square might contain a diagonal but we simply delete this. We have $tw(G') \leq tw(G)$ because treewidth is non-increasing under taking minors. Now, by projecting the contraction operations onto $(\mathcal{B}, \mathbb{T})$ in the usual way\footnote{In every bag of the decomposition vertices from $H_1$ all receive the vertex label $H_1$, and similarly for the other subsets $H_2, L_1, L_2$.}, we obtain a tree
decomposition $(\mathcal{B}', \mathbb{T}')$ for $G'$ such that the width of $\mathbb{T}'$ is less than or equal to the width of $\mathbb{T}$. The bag in $(\mathcal{B}', \mathbb{T}')$ corresponding to $B$, let us call this $B'$, contains all four vertices $H_1, H_2, L_1, L_2$. Clearly, $\mathbb{T'}$ is a valid tree decomposition for $G'$. We distinguish two subcases.
\begin{enumerate}
\item If $\mathbb{T}'$ has width at
least 4, we can repeatedly apply the Case 1 transformation to $B'$ to produce an arbitrarily long ladder without raising the width of $\mathbb{T}'$. The resulting decomposition will thus have width no larger than $\mathbb{T}$. 
\item Suppose $\mathbb{T}'$ has width strictly less than 4, and thus strictly less than the width of $\mathbb{T}$. The width of $\mathbb{T}'$ is at least 3 because of the bag containing $H_1, H_2, L_1, L_2$. Case 1 introduces size-5 bags and can thus raise the width of the decomposition by at
most 1. Hence we again obtain a decomposition whose width is no larger than $\mathbb{T}$ for a graph with an arbitrarily long ladder. 
\end{enumerate}
This concludes Case 2.  Moving on, any chordalization of $G$ must add the diagonal $\{w,v\}$ and/or the diagonal $\{u,x\}$. 
Hence we can assume that there is a bag containing $\{u,w,v\}$ and another bag containing $\{v,w,x\}$. (If the other diagonal is added we can simply flip the labelling of the vertices in the horizontal axis i.e. $a \Leftrightarrow b, u \Leftrightarrow v$ and so on). As Case 1 does not hold we can assume that the bag containing $\{u,w,v\}$ is distinct 
from the bag containing $\{v,w,x\}$.

For the benefit of later cases we impose extra structure on our choice of minimum-width tree decomposition of $G$. 
The \emph{distance} of decomposition $(\mathcal{B}, \mathbb{T})$ is the minimum, ranging over all pairs of bags $B_1, B_2$ such that $B_1$ contains $\{u,w,v\}$ and $B_2$ contains $\{v,w,x\}$, of the length of the path
in $\mathbb{T}$ from~$B_1$~to~$B_2$. \\
\\
\fbox{
\parbox{0.97\textwidth}{
\textbf{We henceforth let $(\mathcal{B}, \mathbb{T})$ be a minimum-width tree decomposition of $G$ such that, ranging over all minimum-width tree decompositions, the distance is minimized. Clearly such a tree decomposition exists.}}}\\
\\
Let $B_1, B_2$ be two bags from $\mathcal{B}$ with $\{u,w,v\} \subseteq B_1$, $\{v,w,x\} \subseteq B_2$ which achieve this minimum distance. Let $P$ be the path of bags from $B_1$ to $B_2$, including $B_1$ and $B_2$. We assume that $P$ is oriented left to right, with $B_1$ at the left end and $B_2$ on the right. As Case 2 does not hold, we obtain the following.

\begin{observation}
\label{obs:stuff_not_there}
$B_1$ does not contain $b$, $x$ or $d$, and $B_2$ does not contain $a$, $u$, $c$.
\end{observation}
\begin{hidden}
\begin{proof}
If this was not true then Case 2 would hold, contradiction.
\end{proof}
\end{hidden}


\noindent
\textbf{Case 3. $B_1$ and $B_2$ are adjacent in $P$.} Although this could be subsumed into a later case it introduces important machinery;
we therefore treat it separately.

\emph{Subcase 3.1:} Suppose $a \in B_1$ (or, completely symmetrically, $d \in B_2$). Note that in this case all the edges in $G$ incident to $u$ are covered by $B_1$. Hence, we can
safely delete $u$ from all bags except $B_1$. Next, we create a new bag $B^{*} = \{a,u,w,v\}$ and attach it pendant 
to $B_1$, and finally we replace $u$ with $x$ in $B_1$. It can be easily verified that this is a valid tree decomposition for $G$ and that the width is not increased, so it is still a minimum-width tree decomposition. However, $B_1$ is now a candidate for Case 2, and we are done. Note that replacing $u$ with $x$ in $B_1$ is only possible because $B_1$ is next to $B_2$ in $P$.

\emph{Subcase 3.2:} Suppose Subcase 3.1 does not hold. Then $a \not \in B_1$ (and, symmetrically, $d \not \in B_2$). Putting
all earlier insights together, we see $a, b, x, d \not \in B_1$ and $a,u,c,d \not \in B_2$. Observe that $a$, which is not in $B_2$, is not in any bag to the right of $B_2$. If it was, then the fact that some bag contains the edge $\{a,u\}$, and the running intersection property, entails that $B_2$ would contain at least one of $a$ and $u$, neither of which is permitted. Hence, if $a$ appears in bags other than $B_1$, they are all in the left part of the decomposition. Completely symmetrically, if $d$ is in bags other than $B_2$, they are all in the right part of the decomposition. Because of this, $b$ can only appear on the left of the decomposition (because the edge $\{a,b\}$ has to be covered) and $c$ can only be on the right of the decomposition (because of the edge $\{c,d\}$). Summarising, $B_1$ (respectively, $B_2$) does not contain $a$ or $b$ (respectively, $c$ or $d$) and all bags containing $a$ or $b$ (respectively, $c$ or $d$) are in the left (respectively, right) part of the decomposition. Note that $c \not \in B_1$. This is because edge $\{c,d\}$ has to be in some bag, and this must necessarily be to the right of $B_2$: but then running intersection puts at least one of $c,d$ in $B_2$, contradiction. Symmetrically, $b \not \in B_2$. So $a,b,c,d,x \not \in B_1$ and $a,b,c,d,u \not \in B_2$.

We now describe a construction that we will use extensively:  \emph{reeling in (the snakes) $a$ and $b$}. Observe that, due to coverage of the edge $\{a,u\}$, and running intersection, there is a simple path of bags $p_{ua}$ starting
at $B_1$ that all contain $u$ such that the endpoint of the path also contains $a$. The path will necessarily be entirely on the left of the decomposition. Due to coverage of the edge $\{b,v\}$ there is an analogously-defined simple path $p_{vb}$. (Note that $p_{ua}$ and $p_{vb}$
both exit $B_1$ via the same bag $B'$. If they exited via different bags, coverage of the
edge $\{a,b\}$ would force at least one of $a,b$ to be in $B_1$, yielding a contradiction). Now, in the bags along $p_{ua}$, except $B_1$, we relabel $u$ to be $a$, and in the bags
along $p_{vb}$, except $B_1$, we relabel $v$ to be $b$. This is no longer necessarily a valid
tree decomposition, because coverage of the edges $\{u,a\}$ and $\{v,b\}$ is no longer
guaranteed, but we shall address this in due course. Next we delete the vertices
$u,w,v$ from all bags on the left of the decomposition, except $B_1$; they will not be needed. (The only reason that $w$ would be in a bag on the left, would be to meet $c$, since $B_1$ and $B_2$ already cover the edges $\{u,w\}$ and $\{w,x\}$. But then, due to coverage of the edge $\{c,d\}$ and the fact that $d$ only appears on the right of the decomposition, running intersection would put at least one of $c,d$ in $B_1$, contradiction.) Observe that $B'$ contains $\{a,b\}$. We replace $B_1$ with 5 copies of itself, and place these bags in a path such that the leftmost copy is adjacent to $B'$, the rightmost copy is adjacent to $B_2$, and all other bags that were originally adjacent to $B_1$ can (arbitrarily) be made adjacent to the leftmost copy. In the 5 copied bags we replace $\{u,w,v\}$ respectively with: $\{a,u',b\}$, $\{u',b,v'\}$, $\{u',u,v'\}$, $\{v',u,v\}$ and $\{u,w,v\}$. It can be verified that this
is a valid tree decomposition for $G'$, and our construction did not inflate the treewidth - we either deleted vertices from bags or relabelled vertices that were already in bags - so we are done. The operation can easily be telescoped, if desired, to achieve
an arbitrarily long ladder.

\smallskip 

\noindent
\textbf{Case 4. $P$ contains at least one bag other than $B_1$ and $B_2$.} 
\begin{observation}
\label{obs:obs_v_w_everywhere}
All bags in $P$ contain $v,w$, by the running intersection property.
\end{observation}

We partition the bags of the decomposition into (i) $B_1$, (ii) bags \emph{left} of $B_1$, (iii) $B_2$, (iv) bags \emph{right} of $B_2$, (v) all other bags (which we call the \emph{interior}).

\begin{hidden}
\begin{itemize}
\item $B_1$,
\item bags \emph{left} of $B_1$,
\item $B_2$,
\item bags \emph{right} of $B_2$,
\item all other bags. These are exactly the inner bags of the path $P$ (i.e. all bags on $P$ excluding its endpoints $B_1$ and $B_2$), plus all bags that can be reached from an inner bag of $P$ without passing through $B_1$ or $B_2$. For this reason we call this group the \emph{interior}. 
\end{itemize}
\end{hidden}

Recall that $b,d,x \not \in B_1$, $a,c,u \not \in B_2$ (because Case 2 does not hold). 
\begin{observation}
\label{obs:no_u_no_x_in_interior_or_wrong_side}
No bag in the interior contains $u$ or $x$. $B_1$ does not contain $x$, and no bag on the left contains $x$. Symmetrically, $B_2$ does not contain $u$, and no bag on the right contains $u$.
\end{observation}
\begin{proof}
Recall Observation \ref{obs:obs_v_w_everywhere}. If some bag in the interior contained $u$ or $x$, we could
due to running intersection (in particular: due to $u \in B_1, x \in B_2$) find two bags on $P$ containing $\{u,v,w\}$ and $\{v,w,x\}$ that were closer than $B_1$ and $B_2$, contradiction. Next, we have already established that $x \not \in B_1$ and $u \not \in B_2$. If $x$ was on the left, then running intersection would put $x \in B_1$ (because $x \in B_2$), contradiction. If $u$ was on the right, then running intersection would put $u \in B_2$ (because $u \in B_1$), contradiction.
\end{proof}

\begin{observation}
\label{obs:a_and_d_magnet}
At least one of the following is true: $a \in B_1$,  $a$ is in a bag on the left. Symmetrically, at least one of the following is true: $d \in B_2$, $d$ is in a bag on the right.
\end{observation}
\begin{proof}
The edge $\{a,u\}$ (respectively, the edge $\{d,x\}$) needs to be in a bag, 
and from Observation \ref{obs:no_u_no_x_in_interior_or_wrong_side} $u$ and $x$ are restricted in their possible locations.
\end{proof}

Now, suppose $w$ is somewhere on the left. We will show that then either $w$ can be deleted from the bags on the
left, or Case 2 holds. A symmetrical analysis will hold if $v$ is somewhere on the right. Specifically, the only possible reason for $w$ to be on the left would be to cover the edge $\{w,c\}$ -- all other edges
incident to $w$ are already covered by $B_1$ and $B_2$. If no bags on the left contain $c$, we
can simply delete $w$ from all bags on the left. On the other hand, if some bag on the left
contains $c$, then $c \in B_1$, because: $d \not \in B_1$, the need to cover the edge $\{c,d\}$, the presence of $d$ on the other `side' of the decomposition (Observation \ref{obs:a_and_d_magnet}), and running intersection. So we have that $c,u,w,v \in B_1$. This bag already covers all edges
incident to $w$, except possibly the edge $\{w,x\}$. To address this, we replace $w$ everywhere
in the tree decomposition with $x$ - this is a legal tree decomposition because some bag contains $\{w,x\}$ - and then add a bag $B' = \{u,w,x,c\}$ pendant to $B$. This new bag serves to cover all edges incident to $w$.  But $B_1$ now contains $u,v,c,x$, so Case 2 applies, and we are done! Hence, we can assume that $w$ is nowhere on the left, and, symmetrically, that $v$ is nowhere on the right.
In fact, the above argument can, independently of $w$, be used to trigger Case 2 whenever $c \in B_1$ or $b \in B_2$.

So at this stage of the proof we know: $b,c,d,x \not \in B_1$ (and $c$ is not on the left) and $a,b,c,u \not \in B_2$ (and $b$ is not on the right).

\smallskip

\emph{Subcase 4.1:} Suppose $a \not \in B_1$. Then, $a$ must only be on the left. It cannot be in the interior (or on the right) because the edge $\{u,a\}$ must be covered, $a \not \in B_1$, $u \in B_1$, and $u$ is not in the interior (Observation \ref{obs:no_u_no_x_in_interior_or_wrong_side}). 
%
Because $a$ is on the left, and 
because some bag must contain the edge $\{a,b\}$, $b$ must also be on the left. In fact $b$ is only on the left. The presence of $b$ both on the left and in the interior (or on the right) would force $b$ into $B_1$ by running intersection, contradicting the fact that $b \not \in B_1$. So $a,b$ are only on the left. We are now in a situation similar to Subcase 3.2. We use the same \emph{reeling in $a$ and $b$} construction and we are done.

\smallskip

\emph{Subcase 4.2:} Suppose $a \in B_1$. Note that here $u$ has all its incident edges covered by $B_1$, so $u$ can be deleted from all other bags.

Recall that $b \not \in B_1$. Due to edge $\{b,v\}$ some bag must contain both $v$ and $b$. Suppose there is such a bag on the left. We attach a new bag $\{a,u,w,v\}$ pendant to $B_1$ and  delete $u$ from $B_1$. We put $x$ in $B_1$ and to ensure running intersection we replace $v$ with $x$ in all bags anywhere to the right of $B_1$. This is safe, because in the part of the decomposition right of $B_1$, $v$ only needs to meet $x$ (and not $b$, because $v$ meets $b$ on the left). Thus, $B_1$ now contains $\{a,v,w,x\}$ and Case 2 can be applied.


Hence, we conclude that $\{v,b\}$ is not in a bag on the left. Because of this $v$ can safely be deleted from all bags on the left. That is because any path $p_{vb}$ that starts at $B_1$ and finishes at a bag containing $b$ must go via the interior. 
In fact, such a path must avoid $B_2$, and is thus entirely contained in the interior. It avoids $B_2$ because $a,b \not \in B_2$ and $\{v,b\}$ cannot be in a bag to the right: if it was, coverage of edge $\{a,b\}$, the fact that $a \in B_1$ and running intersection would mean that at least one of $a$ and $b$ is in $B_2$, yielding a contradiction.

The only case that remains is $a \in B_1$, $\{v,b\}$ is not in a bag on the left and thus $p_{vb}$ is in the interior. 
By symmetry, we assume that $d \in B_2$, $\{w,c\}$ is not in a bag on the right and thus $p_{wc}$ is in the interior. 
Consider any path $p_{ab}$ starting at $B_1$, defined in the now familiar way. Note that no bag on the left of $B_1$ can contain $b$. This is because $\{v,b\}$ is in a bag in the interior: hence if $b$ was also on the left, $b$ would then by running intersection be in $B_1$ and we would be in an earlier case. This means that $p_{ab}$ must go via the interior.
Suppose the following operation gives a valid tree decomposition: delete $u$ from $B_1$, attach a new bag $B^{*} = \{a,u,w,v\}$ pendant to $B_1$, and relabel all
occurrences of $a$ along the path $p_{ab}$ (except in $B_1$) with $b$. Then we are done, because we are back in Case 2. A symmetrical situation holds for the path $p_{dc}$.

\begin{figure}[t]
\centering
\includegraphics[scale=1.2]{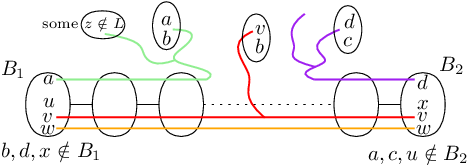}
\caption{
Path $p_{ab}$ goes via the interior, but it cannot be relabelled to $b$ because it is used by other paths $p_{az}$ to some neighbour $z$ of $a$ that does \emph{not} lie on the ladder. 
}
\label{fig:trickycase} 
\end{figure}

Assume therefore that this transformation does not give a valid tree decomposition. This is the most complicated case to deal with. It is depicted in Fig. \ref{fig:trickycase}. The issue here is that the path $p_{ab}$ (respectively, $p_{dc}$) necessarily goes via the interior, but cannot be relabelled with $b$ (respectively, $c$) because the path is also part of $p_{az}$ (respectively, $p_{dz}$) where
$z$ is some non-ladder vertex that is adjacent to $a$ (respectively $d$). We deal with this as follows. We argue that some bag in the decomposition \emph{must} contain $a,b,v$ (and possibly other vertices). Suppose this is not the case. By standard chordalization arguments, every chordalization adds at least one diagonal edge to every square of the ladder. If $\{a,b,v\}$ are not together in a bag, then this is because the corresponding chordalization did not add the diagonal $\{a,v\}$ to square $\{a,u,b,v\}$. Hence,
the chordalization must have added the diagonal $\{u,b\}$. This would in turn mean that some bag contains $\{a,u,b\}$. Such a bag must be on the interior, because this is the only place that $b$ can be found. However, no bags in the interior contain $u$ -- contradiction. 

Hence, some bag $B'$ indeed contains $\{a,b,v\}$. Again, because $b$ is only on the interior, $B'$ must be in the interior. There could be multiple such bags, but this does not harm us. Let $B_{v\text{-done}}$ be the rightmost bag on the path $P$ that is part of a path, starting from $B_1$, from $a$ to some bag $B'$  containing $\{a,b,v\}$. Let $B_{a\text{-done}}$ be the rightmost bag on $P$ that contains $a$. Note that $B_{v\text{-done}}$ contains $a$ (because of
running intersection: $a \in B_1$ and $a \in B'$) and $v, w$ (because it lies on $P$). We also have $a,v,w \in B_{a\text{-done}}$. By construction, $B_{v\text{-done}}$ is either equal to $B_{a\text{-done}}$ or left of it. This is important because it means that the only reason $v$ might need to be in bags to the right of $B_{a\text{-done}}$ is to reach a bag containing $x$ (i.e. to cover the edge $\{v,x\}$) -- all other edges are already covered elsewhere in the decomposition; in particular, edge $\{v,b\}$ is covered by the bag $B'$ containing $\{a,b,v\}$. See Fig. \ref{fig:trickycase2} for clarification.
\begin{figure}[t]
\centering
\includegraphics[scale=1.2]{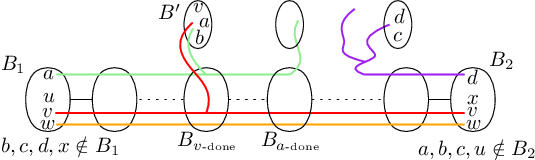}
\caption{The bags $B'$, $B_{v\text{-done}}$ and $B_{a\text{-done}}$ illustrated. Note that $B_{a\text{-done}}$ cannot be the penultimate bag on the path $P$ from $B_1$ to $B_2$, due to the presence of $d$ in that bag.}
\label{fig:trickycase2} 
\end{figure}
Recall that none of the bags on the path $P$ contain both $a$ and $d$. (If they did, there would be a bag containing $\{a,d,v,w\}$ and we would be in Case 2, done.) We also know that some path $p_{dc}$ goes via the interior and thus that the penultimate bag on $P$ (i.e. the one before $B_2$) thus definitely contains $d$. (To clarify: $c \not \in B_2$, $d \in B_2$, the edge $\{c,d\}$ must be covered, and $c$ is only in the
interior). Combining these insights tells us that this penultimate bag definitely does \emph{not} contain $a$, and hence $B_{a\text{-done}}$ is \emph{not} equal to the penultimate bag; it is further left. This fact is crucial. Consider $B_{a\text{-done}}$ and the bag immediately to its right on $P$. Between these two bags we insert a copy of $B_{a\text{-done}}$, call it $B_r$, remove $a$ from $B_r$ (i.e. forget it), and add the element $x$ to it instead. Finally, we switch
$v$ to $x$ in all bags on $P$ right of $B_r$, including $B_2$ itself, and delete $v$ from all bags in the tree decomposition that are anywhere to the right of $B_r$; there is no point having them there. It requires some careful checking but this is a valid (minimum-width) tree decomposition. Moreover, $B_r$ contains $w,v,x$. The fact that $B_{a\text{-done}}$ was not the penultimate bag of $P$, means that the length of the path from $B_1$ to $B_{r}$ is strictly less than the length of the path from $B_1$ to $B_2$: contradiction on the assumption that these were the closest bags containing $\{u,w,v\}$ and $\{w,v,x\}$ respectively. We are done.
\end{proof}

We now deal with the situation when the $tw(G) \geq 4$ assumption is removed. 
\begin{hidden}
\begin{corollary}
\label{cor:mainAllTreewidth}
Let $G$ be an undirected graph of arbitrary treewidth. There is a universal constant $C'$ such that if there exists a
ladder $L$ of length length $C'$ or longer in $G$, the ladder can be increased arbitrarily in length without
altering (in particular: increasing) the treewidth.
\end{corollary}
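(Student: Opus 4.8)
The plan is to take $C' = 4$ and to reduce, as quickly as possible, to the single genuinely new case $tw(G)=3$. First I would dispose of the disconnecting case: if the length-$\geq 4$ ladder $L$ is disconnecting, Lemma~\ref{lem:disconnecting} already lets $L$ be lengthened arbitrarily, irrespective of the treewidth. So assume $L$ is not disconnecting; since $L$ has length $\geq 4 \geq 2$, Observation~\ref{obs:k4} gives $tw(G)\geq 3$. In particular, if $tw(G)\leq 2$ then $L$ must have been disconnecting, so those treewidths are already handled. If $tw(G)\geq 4$, Theorem~\ref{thm:main} applies directly (the ladder has length $\geq 4\geq 3$) and we are done. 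This leaves exactly $tw(G)=3$, which is where all the work lies.

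For $tw(G)=3$ I would re-run the case analysis in the proof of Theorem~\ref{thm:main}, applied to one square of $L$ together with its two flanking rungs (a length-$3$ window $a,b,u,v,w,x,c,d$ inside the longer ladder, which exists since $L$ has length $\geq 4$). The crucial observation is that the only place the hypothesis $tw(G)\geq 4$ was used is to absorb the size-$5$ bags introduced in Case~1 (and in the reductions to it from Case~2 and Subcases~3.1 and~4.2); every \emph{reeling-in} construction (Subcases~3.2 and~4.1, and the resolution of the Case~4 tricky subcase) merely copies, relabels and deletes bags, so it never creates a bag larger than an existing one and is already width-preserving. I would therefore replace the Case~1 step by a width-$3$ construction: if a size-$4$ bag equals a square $B=\{u,v,w,x\}$ (horizontals $\{u,w\},\{v,x\}$, rungs $\{u,v\},\{w,x\}$), then to insert the new rung $\{u',v'\}$ I replace $B$ by the path of bags $\{u,v,u'\}$, $\{u',v,v'\}$, $\{u',v',w,x\}$, re-attaching the former neighbours on the $\{u,v\}$-side to the first bag and those on the $\{w,x\}$-side to the last. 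One checks this is a valid width-$3$ decomposition of the lengthened graph, and — crucially — its last bag $\{u',v',w,x\}$ is again a full square, so the step self-perpetuates and can be telescoped to arbitrary length.

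The hypothesis length $\geq 4$ (rather than $\geq 3$) enters precisely through Case~2. There the contraction collapses the length-$3$ window to a single square, i.e. shortens the \emph{whole} ladder by two; starting from length $\geq 4$, the contracted ladder still has length $\geq 2$ and is still non-disconnecting, so Observation~\ref{obs:k4} certifies that the minor $G'$ again has $tw(G')\geq 3$, hence $tw(G')=3$. Its distinguished bag is a full square, and the width-$3$ Case~1 construction then lengthens $G'$ — equivalently $G$, since the two share the same non-ladder part — to any length at width $3$. This is exactly the threshold that makes the bound tight: had we only assumed length $\geq 3$, the contraction could leave a single square, on which Observation~\ref{obs:k4} is silent, and the treewidth could genuinely drop.

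The main obstacle, and where I expect the real care to be needed, is checking that \emph{every} appeal to $tw(G)\geq 4$ in the original proof can be discharged at width $3$ by the substitution above, not merely the headline Case~1. Concretely, I would need to re-verify the reeling-in arguments in the setting where the window's corners $a,b,c,d$ are no longer necessarily cornerpoints of $L$ but may be interior ladder vertices (so their ``outside'' is the remainder of the ladder rather than arbitrary graph), and, most delicately, re-run the Case~4 tricky subcase: the copy-$B_{a\text{-done}}$ / switch-$v$-to-$x$ step that yields the distance-minimality contradiction must be re-checked to produce only bags of size $\leq 4$. I expect this to go through, since that step too only copies and relabels, but confirming the running-intersection bookkeeping at width $3$ is the genuine crux.
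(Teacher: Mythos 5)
Your opening reduction --- disconnecting ladders via Lemma~\ref{lem:disconnecting}, then Observation~\ref{obs:k4} to force $tw(G)\geq 3$, then Theorem~\ref{thm:main} for $tw(G)\geq 4$ --- is exactly the paper's, but you overlooked how cheaply the paper then finishes. The corollary asserts only that \emph{some} universal constant exists, and once every treewidth other than $3$ has been dispatched, Theorem~\ref{thm:unbounded} can be invoked with the now-fixed value $k=3$: it supplies a constant $f(3)$ that depends on nothing else, and the paper simply takes $C'=\max(f(3),3)$. What you set out to prove instead, $C'=4$, is the statement of Theorem~\ref{thm:main3}, which the paper treats as a separate and substantially harder result, proved in the appendix via biconnectivity, a clique-separator decomposition on $\{c,d\}$, and Lemma~\ref{lem:pointed}.

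Your argument for that stronger claim has a genuine gap, and it sits precisely at the step you marked as routine rather than at the Case~4 bookkeeping you flagged as the crux. The width-$3$ replacement for Case~1 --- splitting $B=\{u,v,w,x\}$ into the path $\{u,v,u'\}$, $\{u',v,v'\}$, $\{u',v',w,x\}$ --- silently assumes every neighbour of $B$ meets $B$ inside $\{u,v\}$ or inside $\{w,x\}$. A neighbour meeting both sides cannot be reattached anywhere: running intersection would require some bag of your path to contain vertices from both ends of the split, and none does. Such neighbours do occur. Take the ladder with rungs $\{a,b\},\{u,v\},\{w,x\},\{c,d\},\{e,f\}$, add $z_1$ adjacent to $a,b$ and $z_2$ adjacent to $a,e$; this graph has treewidth $3$, its ladder has length $4$ and is not disconnecting, and the path of bags $\{z_1,a,b\}$, $\{a,b,u,v\}$, $\{z_2,a,u,v\}$, $\{z_2,u,v,w\}$, $\{z_2,v,w,x\}$, $\{z_2,w,x,c\}$, $\{z_2,x,c,d\}$, $\{z_2,c,d,e\}$, $\{z_2,d,e,f\}$ is a small minimum-width tree decomposition in which $\{a,b,u,v\}$ is a full square whose neighbour $\{z_2,a,u,v\}$ meets it in $\{a,u,v\}$; your construction has nowhere to put that bag. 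Worse, the construction is only ever needed in this bad regime: if a square consists of \emph{internal} ladder vertices then, since $L$ is not disconnecting, no proper subset of it is a separator of $G$, so in a small width-$3$ decomposition no bag can contain that square at all (this is the separator argument of Lemma~\ref{lem:main2}). Hence at treewidth $3$, Case~1 can trigger only on squares containing cornerpoints of $L$, whose arbitrary outside attachments are exactly what defeats your reattachment; and your Case~2 treatment inherits the same defect, because the contracted square absorbs those cornerpoints and so its subsets can again be separators of $G'$. Handling this corner behaviour is the actual content of Theorem~\ref{thm:main3} --- the dichotomy on paths from $c,d$ to the far end of the ladder, the clique separator $\{c,d\}$, and the degree-$2$-corner lengthening of Lemma~\ref{lem:pointed} --- none of which your proposal reconstructs.
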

\begin{proof}
If $L$ is disconnecting, a ladder of length 1 is already sufficient (Lemma \ref{lem:disconnecting}). So assume that $L$ is not disconnecting. From Observation \ref{obs:k4} we know $tw(G) \geq 3$. Now, recall the function $f$ from Theorem \ref{thm:unbounded}. We take $C' = \max(f(3), 3)$, where $f(3)$ indicates a starting length from which ladders can be extended in graphs of treewidth 3 without increasing the treewidth. The second term in the max operator is the constant 3 obtained from the statement of Theorem \ref{thm:main} and which holds for all graphs $G$ such that $tw(G) \geq 4$.
\end{proof}

The above corollary is somewhat unsatisfactory because, although the constant $f(3)$ does exist, it is much larger than the constant 3 from Theorem \ref{thm:main}. However, by carefully analysing the case when $tw(G) \leq 3$ we can produce a more satisfactory result that again works for all treewidths.
\end{hidden}
\begin{lemma}
\label{lem:main2}
If $G$ has a ladder $L$ of length 5 or longer, the ladder can be increased in length arbitrarily without
altering (in particular: increasing) the treewidth. This holds irrespective of the treewidth of $G$.
\end{lemma}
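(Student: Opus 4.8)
The plan is to collapse the problem onto a single outstanding case and then give a width-preserving insertion by hand. First I would clear the easy regimes exactly as the earlier results permit. If $L$ is disconnecting, Lemma~\ref{lem:disconnecting} already lengthens it arbitrarily (indeed from length~$1$). Otherwise $L$ is non-disconnecting of length $\ge 5 \ge 2$, so Observation~\ref{obs:k4} gives $tw(G)\ge 3$; and if $tw(G)\ge 4$ then Theorem~\ref{thm:main} applies verbatim, since length $\ge 3$ suffices there. Hence the whole content of the lemma is the case $tw(G)=3$, $L$ non-disconnecting, of length at least~$5$. Here I must build a width-$3$ tree decomposition of the graph $G'$ obtained by inserting one extra rung, and then telescope the construction to arbitrary length.

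For this I would re-run the case analysis of Theorem~\ref{thm:main} on a minimum-width (hence width-$3$) decomposition, again chosen to minimise the distance between the two diagonal bags. Inspecting that argument, every move already keeps all bags of size at most $4$ \emph{except} the one place that manufactures size-$5$ bags, namely the Case~1 insertion (and, through the reduction of Case~2, its use there). So the real task is to supply a width-$3$ substitute for Case~1. Where Case~1 used a bag $B$ containing a full square $\{u,v,w,x\}$ (for $tw=3$ this bag \emph{is} the square, of size~$4$), I would delete $B$ and splice in the ``zigzag'' path $\{u,v,u'\}-\{v,u',v'\}-\{u',v',w,x\}$, carrying along at most the one further vertex of $B$. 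These bags have size at most~$4$, they cover precisely the new edges $\{u,u'\},\{u',v'\},\{v,v'\},\{u',w\},\{v',x\}$, and the last bag $\{u',v',w,x\}$ is again a full square, so the move can be iterated without ever exceeding width~$3$. Since Case~2 reaches Case~1 only after contracting away two squares of the ladder, this is exactly where the threshold rises from $3$ to~$5$: we need the (possibly contracted) ladder still to contain a genuine interior square to splice at, and $5-2=3$ leaves just enough room.

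The hard part is re-attaching the subtrees that hung off $B$. A neighbouring subtree that shares with $B$ a ``wide'' pair — a diagonal $\{u,x\}$ or $\{v,w\}$, or a crossing horizontal $\{u,w\}$ or $\{v,x\}$ — cannot be reconnected, because no single bag of the zigzag holds two such far-apart ladder vertices; only the compatible pairs $\{u,v\}$ and $\{w,x\}$ survive. The way around this is to use that in a length-$5$ ladder I may place the new rung deep in the interior, so that the two (or four) interior vertices of the operative square have no neighbours outside $L$ at all. Then the only way a wide pair could be forced into a common neighbouring bag is a badly routed ``snake'' of an interior vertex, and these can be straightened by the reeling-in operations already developed in Cases~3--4 of Theorem~\ref{thm:main} \emph{before} the splice. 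I expect this straightening-then-splicing bookkeeping to be the main obstacle, precisely as controlling the snakes was the crux of the treewidth-$\ge 4$ case; the only new demand is that every bag created during straightening must itself be kept to size~$4$.
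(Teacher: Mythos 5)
Your reduction to the case $tw(G)=3$, $L$ non-disconnecting, and your diagnosis that Cases 3--4 of Theorem~\ref{thm:main} already preserve width~3 (only Cases 1 and 2 manufacture size-5 bags) both match the paper exactly. The gap is in what you do next. You try to \emph{perform} Case~1 at width 3 via the zigzag splice, and you yourself flag the obstruction: neighbours of $B$ sharing a ``wide'' pair ($\{u,w\}$, $\{v,x\}$, $\{u,x\}$ or $\{v,w\}$) cannot be re-attached. Your proposed remedy --- that such pairs arise only from badly routed snakes of interior vertices, which can be ``straightened by the reeling-in operations already developed in Cases 3--4'' --- is not an argument, and it does not follow from anything in the paper: the reeling-in machinery is defined only under the standing hypothesis that Cases 1 and 2 \emph{fail}; it reels the cornerpoints $a,b$ along paths anchored at the diagonal bags $B_1,B_2$, and says nothing about clearing wide pairs out of the neighbourhood of a Case-1 bag. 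Nor is the interiority of $u,v,w,x$ by itself any protection: the horizontal edges $\{u,w\}$ and $\{v,x\}$ must be covered somewhere, and the snake of $u$ (which must reach $a$) and the snake of $w$ (which must reach $c$) both emanate from $B$, so nothing you have said prevents them from overlapping in bags adjacent to $B$. This re-attachment step is precisely the hard content of the $tw=3$ case, and it is missing.

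The paper resolves this case in a completely different --- and in fact opposite --- way: it shows that when $tw(G)=3$ and the operative square is flanked by buffer squares on both sides (this is where length $\geq 5$ is used), Cases 1 and 2 \emph{cannot occur at all}, so there is nothing to splice. Concretely: pass to a \emph{small} tree decomposition (no bag contained in another); then for any bag $B$ containing a central square $S$ and any neighbour $B'$, the intersection $B\cap B'$ is a proper subset of $B$ and must be a separator of $G$; but the flanking vertices $a',b',c',d'$ with edges $\{a',b'\},\{c',d'\}$, together with $L$ being non-disconnecting, guarantee that neither $S$ nor any subset of $S$ is a separator --- contradiction (and if $|B|\geq 5$ then $tw(G)\geq 4$ directly). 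The same separator argument, applied after the Case-2 contraction to the bag containing $H_1,H_2,L_1,L_2$, kills Case 2. Your splice construction is fine as far as it goes (when no neighbour shares a wide pair it does produce a valid width-3 decomposition with an iterable square bag), but any completion of your outline would in effect require discovering that the configuration you are trying to repair is impossible --- i.e., the separator argument you are missing.
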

\begin{proof}
Let $L$ be a ladder of length 5 or longer.
We can assume that $L$ is not disconnecting and $tw(G) \leq 3$. We select the three most central squares and label these as in Fig. \ref{fig:ladder}. These are flanked on both sides by at least one other square. Hence, $a,b,c,d$ each has exactly one neighbour  outside the 3 squares, let us call these $a', b', c',d'$ respectively, where $\{a',b'\}$ is an edge and $\{c',d'\}$ is an edge. Now, $tw(G) = 3$ because $L$ is not disconnecting. The only part of the proof of Theorem \ref{thm:main} that does not work for $tw(G)=3$ is Case 1 and (indirectly) Case 2 because these create size-5 bags. We show that neither case can hold.

Consider Case 1.  Let $B$ be a bag containing one of the three most central squares $S$ of the ladder (these are the only squares to which Case 1 is ever applied).
A \emph{small} tree decomposition is one where no bag is a subset of another. If a tree decomposition is not small, then by running intersection it must contain two adjacent bags $B^{\dagger}, B^{\ddagger}$ such that $B^{\dagger} \subseteq B^{\ddagger}$. The two bags can then be safely merged into $B^{\ddagger}$. By repeating this a small tree decomposition can be obtained without raising the width of the original minimum-width decomposition. Furthermore, some bag $B$ will still exist containing $S$. If $B$ has five or more vertices we immediately have $tw(G) \geq 4$ and we are done. Otherwise, let $B'$ be any bag adjacent to $B$; such a bag must exist because $G$ has more than 4 vertices. Due to the smallness of the decomposition we have $B \not \subseteq B'$ and $B' \not \subseteq B$. Hence, $B \cap B' \subset B$ and $B \cap B' \subset B'$. A \emph{separator} is a subset of vertices whose deletion disconnects the graph. Now, $B \cap B'$ is by construction, and the definition of tree decompositions a separator of $G$. However, due to our use of the three central squares, $S$ is not a separator, and no subset of it is a separator either;
the inclusion of $a', b', c', d'$ and the edges $\{a',b'\}$ and $\{c',d\}$, alongside the fact that $L$ is not disconnecting, ensure this. This yields a contradiction. Hence Case 1 implies $tw(G) \geq 4$ i.e. it cannot happen when $tw(G)=3$.

We are left with Case 2. This case replaces the three centremost squares with a single square, and deletes any diagonals that this single
square might have, to obtain a new graph $G'$. We have $tw(G') \leq tw(G)$, by minors. Note that $tw(G') \geq 3$ because the shorter ladder in $G'$ (which has length at least 3) is still disconnecting. Hence, $tw(G') = tw(G) = 3$. The decomposition $\mathbb{T}'$ of $G'$ obtained by projecting the contraction operations onto the tree decomposition $\mathbb{T}$ of $G$, is a valid tree decomposition (as argued in Case 2) with
the property that the width of $\mathbb{T}'$ is less than or equal to the width of $\mathbb{T}$. $\mathbb{T}'$ cannot have width less than 3, so it must
have width 3. Hence it is a tree decomposition of $G'$ in which all bags have at most four vertices. We then transform $\mathbb{T}'$ into a small tree composition: this does not raise the width of the decomposition, and every bag prior to the transformation either survives or is absorbed into another. Consider
the bag $B'$ containing $H_1, H_2, L_1, L_2$. The presence of $a', b', c,', d'$ in $G'$ and the fact that the ladder in $G'$ is not disconnecting, means that $H_1, H_2, L_1, L_2$ is not a separator for $G'$, and neither is any subset of those four vertices. But the intersection of $B'$ with any neighbouring bag \emph{must} be a separator. Hence $B'$ must contain a fifth vertex, contradiction. So Case 2 cannot happen when $tw(G)=3$.
\end{proof}

We can, however, still do better. Consider first the following auxiliary lemma.

\begin{lemma}
\label{lem:pointed}
If $G$ has $tw(G) \geq 3$ and a ladder $L$ of length 1 or longer whereby at least one of the four cornerpoints of the ladder has degree 2, the ladder can be increased in length arbitrarily without
altering (in particular: increasing) the treewidth. 
\end{lemma}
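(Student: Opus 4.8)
The plan is to exploit the degree-2 cornerpoint to mimic the effect of a disconnecting ladder, or failing that, to reduce directly to one of the earlier, already-handled situations. Let the ladder $L$ of length $1$ have cornerpoints $a,b,c,d$ with internal square vertices as in Fig.~\ref{fig:ladder}, and suppose without loss of generality that $c$ has degree $2$; thus the only edges incident to $c$ are the ladder edges $\{c,w\}$ and $\{c,d\}$ (or the analogous pair if the length is exactly $1$). The first step is to take a minimum-width tree decomposition $(\mathcal{B},\mathbb{T})$ of $G$ and argue, via the chordalization viewpoint already used in the proof of Theorem~\ref{thm:main}, that some bag must contain the three vertices forming the square together with $c$'s neighbours, so that locally the situation is as rigid as in Case~1.

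The key observation I would pursue is that, because $c$ has degree $2$, all edges incident to $c$ are covered by any single bag that contains both $c$'s neighbours. This means $c$ behaves like a ``pendant'' attached to the rest of the ladder: we can forget $c$ everywhere except in one bag $B_c$ that contains $\{c,d,w\}$ (using that $tw(G)\ge 3$ forces a bag with these three vertices by the chordalization argument), and then $c$ imposes no further constraint on the decomposition. I would then insert a new rung into the ladder exactly as in the Case~1 / Case~2 constructions of Theorem~\ref{thm:main}, but now the freedom gained by $c$ having degree $2$ lets us build the two new size-$4$ (rather than size-$5$) bags needed to cover the extra square, so that the width does not rise even when $tw(G)=3$. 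Concretely, after isolating $c$, the remaining structure near the $d$-end of the ladder has enough slack to absorb one extra rung without creating a bag of size $5$, precisely because one of the four corner constraints has been removed.

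I expect the main obstacle to be the same ``snake'' phenomenon that dominates the proof of Theorem~\ref{thm:main}: even though $c$ is degree $2$, the other ladder vertices $u,v,w,x,a,b,d$ may still wind through the decomposition in pathological ways, and the bags containing $w$ or $v$ need not be conveniently placed relative to $B_c$. The hard part will be to show that the degree-$2$ assumption genuinely buys us a reduction to an already-solved case — that is, that we can always maneuver into the configuration of Case~2 (a bag with two high vertices and two low vertices) while keeping width $3$, using $c$'s pendant-like behaviour to avoid the size-$5$ bags that Case~1 and Case~2 would otherwise introduce. I would handle this by a short case analysis on where $\{c,d\}$ and $\{c,w\}$ are covered, reeling in the $d$-end snake by the same relabelling technique used for reeling in $a$ and $b$, and checking in each case that the degree-$2$ vertex lets one of the new bags drop from size $5$ to size $4$.

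Finally, I would observe that the construction can be iterated: the new ladder again has a degree-$2$ cornerpoint (the image of $c$ survives as a degree-$2$ vertex at the end of the lengthened ladder), so the argument telescopes to produce a ladder of arbitrary length without ever raising the width above $\max(\mathrm{tw}(G),3)=\mathrm{tw}(G)$. This closes the lemma and, crucially, supplies the base case invoked in the appendix proof of Theorem~\ref{thm:main3}, where after decomposing around a clique separator one of $c,d$ is forced to have degree $2$.
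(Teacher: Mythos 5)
Your proposal has the right intuition (the degree-2 cornerpoint behaves like a pendant, and size-4 bags should suffice), but it contains a genuine gap, and the one concrete claim it rests on is false as stated. You assert that $tw(G)\geq 3$ together with ``the chordalization argument'' forces some bag of an arbitrary minimum-width tree decomposition to contain $\{c,d,w\}$, i.e.\ $c$ together with both of its neighbours. Chordalization only guarantees that the end square $\{w,x,c,d\}$ receives \emph{one} of its two diagonals: if the decomposition corresponds to adding $\{c,x\}$ rather than $\{w,d\}$, then the bags are guaranteed to contain $\{w,c,x\}$ and $\{c,x,d\}$, and no bag need contain $\{c,d,w\}$ at all. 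This cannot be repaired by the usual flip symmetry, because the degree-2 hypothesis applies to $c$ specifically, not to $d$. Everything downstream of this claim is then deferred rather than proved: you explicitly postpone the snake problem (``I would handle this by a short case analysis\ldots reeling in the $d$-end snake\ldots'') and never exhibit the two size-4 bags or verify running intersection. Note that the naive choice $\{w,x,w',x'\}$ and $\{w',x',c,d\}$ fails precisely there: $c$ and $d$ would appear in non-adjacent bags with a bag between them containing neither.

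The missing idea, which makes the lemma essentially trivial and avoids all Case-1/Case-2 machinery and all snake-reeling, is to \emph{suppress} the degree-2 cornerpoint $c$ first: merge it into $d$, obtaining a vertex $cd$, which turns the end square into a genuine triangle $\{w,x,cd\}$ in the graph, without changing the treewidth (suppression of degree-2 vertices is safe since $tw(G)\geq 3$). A triangle is a clique, so \emph{every} tree decomposition of the suppressed graph has a bag $B$ containing $\{w,x,cd\}$ -- no chordalization case analysis, no choice of diagonal, no minimality or modification of the decomposition is needed. One then attaches pendant to $B$ the chain of size-4 bags $\{w,x,w',cd\}$, $\{x,w',x',cd\}$, which covers the inserted rung $\{w',x'\}$ and satisfies running intersection; since $tw(G)\geq 3$, width does not increase. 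The construction iterates from the bag $\{x,w',x',cd\}$ (which again contains the end triangle), and the degree-2 vertex is restored at the end by subdivision. Your closing paragraph about telescoping and about this lemma serving Theorem~\ref{thm:main3} is correct in spirit, but as written your argument does not get off the ground, because its foundation (the bag containing $\{c,d,w\}$ in an arbitrary decomposition) does not exist.
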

\begin{proof}
Let $\{a,b,c,d\}$ be the four cornerpoints of the ladder and let $c$ be a degree-2 cornerpoint. Assume as usual that $c$ and $d$ are part of square $\{w,x,c,d\}$. If we suppress
$c$ and relabel vertex $d$ as $cd$ we create a triangle
$\{w,x,cd\}$ in $G$ without altering its treewidth. Take any minimum-width decomposition. This triangle must be contained in some bag $B$ of the decomposition. Pendant to $B$ we attach a new chain of bags $\{w,x,w',cd\}$, $\{x,w',x',cd\}$. This is a valid tree decomposition for the graph obtained from $G$ by inserting a new rung in the ladder $\{w',x'\}$ parallel to edge $\{w,x\}$. The construction can be iterated if desired to insert more rungs in the ladder, by attaching bags pendant to bag $\{x,w',x',cd\}$. Once completed the degree 2 vertex can be re-introduced via subdivision, if desired.
\end{proof}\\
\\
\begin{theorem}
\label{thm:main3}
If $G$ has a ladder $L$ of length 4 or longer, the ladder can be increased in length arbitrarily without
altering (in particular: increasing) the treewidth. This holds irrespective of the treewidth of $G$.
\end{theorem}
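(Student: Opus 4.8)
The plan is to reduce the statement to a single local insertion and then invoke Lemma~\ref{lem:main2}. If $tw(G)\geq 4$ the claim is immediate from Theorem~\ref{thm:main}, so suppose $tw(G)\leq 3$. If $L$ is disconnecting we are done by Lemma~\ref{lem:disconnecting}, so assume $L$ is not disconnecting; then Observation~\ref{obs:k4} forces $tw(G)=3$. Hence it suffices to prove the following local statement: a length-$4$, non-disconnecting ladder in a graph of treewidth $3$ can be extended by \emph{one} square without raising the treewidth. Indeed, the extended graph then contains a ladder of length $5$, and Lemma~\ref{lem:main2} extends it arbitrarily while preserving the treewidth.

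To set up, among the four squares of $L$ I would select three consecutive ones as a length-$3$ operating sub-ladder $L'$ and label its vertices $a,b,u,v,w,x,c,d$ as in Fig.~\ref{fig:ladder}. Up to the horizontal/vertical symmetry we may assume the fourth, flanking square lies to the right, so that $c,d$ each have exactly one neighbour $c',d'$ outside $L'$ with $\{c',d'\}$ an edge; in particular $c,d$ carry no attachments to the rest of $G$ on that side. By contrast $a,b$ are genuine cornerpoints of $L$ and may have arbitrary attachments. I would then re-run the case analysis from the proof of Theorem~\ref{thm:main} on $L'$. The only steps that can push a bag above size $4$ are Case~1 and (through it) Case~2; every other step merely relabels or deletes vertices and so is harmless at width $3$. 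It therefore suffices to forbid those two cases, or to realise the insertion within them using only size-$\le 4$ bags.

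On the flanked side the argument of Lemma~\ref{lem:main2} transfers essentially verbatim. Non-disconnectivity supplies a path through the rest of $G$ joining the two cornerpoint-ends of $L$; together with the vertices $c',d'$ and the edge $\{c',d'\}$, this guarantees that the squares $\{u,v,w,x\}$ and $\{w,x,c,d\}$ -- and the contracted square of Case~2 whenever the contraction does not isolate $a$ or $b$ -- are not separators of $G$, and neither is any subset of them. Since in a \emph{small} minimum-width decomposition the intersection of two adjacent bags must be a separator, a bag holding any of these squares in full would need a fifth vertex, giving $tw(G)\geq 4$, a contradiction. This kills Case~1 for these squares and the corresponding instances of Case~2.

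The main obstacle is the remaining square $S_1=\{a,b,u,v\}$, which touches the genuine cornerpoints $a,b$: because $a,b$ may carry pendant attachments, $S_1$ (and even $\{a,b\}$) can be a separator, and the separator contradiction above fails precisely here. I would control this end by examining the bag-subtrees (the ``snakes'') of the rung $\{u,v\}$ near the unflanked side. The crucial point is that the \emph{interior} rung $\{u,v\}$ remains a non-separator: deleting $u,v$ leaves the column of $a,b$ joined to the rest of the ladder through the external path guaranteed by non-disconnectivity, so $\{u,v\}$ separates nothing. Consequently, in a small width-$3$ decomposition no bag equal to $S_1$ can attach to the rest of the ladder through the intersection $\{u,v\}$; the $u$- and $v$-snakes must diverge before reaching such a bag, and I would exploit this to insert the new rung strictly inside the protected interior (between $\{u,v,w,x\}$ and the flanked end) by a reeling-in style construction that only relabels or deletes vertices, keeping every bag of size at most $4$. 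Verifying that this interior insertion is always available -- that the attachments at $a,b$ can never force a genuine size-$5$ bag at $S_1$ -- is the delicate, case-heavy heart of the argument and is where non-disconnectivity is used most heavily. Once one rung has been inserted at width $3$, Lemma~\ref{lem:main2} completes the proof.
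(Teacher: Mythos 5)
Your setup is sound and matches the first half of the paper's argument: reduce to $tw(G)=3$ with $L$ non-disconnecting, use the fourth square as a buffer on one side, and observe that the separator argument of Lemma~\ref{lem:main2} (smallness of the decomposition plus the non-separator property supplied by the buffer square and the external path) kills Case~1 and Case~2 of Theorem~\ref{thm:main} for any square avoiding the two unbuffered cornerpoints. You have also correctly located the obstruction: squares touching the genuine cornerpoints ($a,b$ in your labelling), whose outside attachments can make them separators. But precisely at that point your text stops being a proof. The claim that the $u$- and $v$-snakes ``must diverge'' before any bag containing $S_1=\{a,b,u,v\}$, and that one can then insert the new rung inside the protected interior by ``a reeling-in style construction'', is an aspiration, not an argument --- you concede yourself that verifying it ``is the delicate, case-heavy heart of the argument''. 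That heart is absent, and it is doubtful it can be supplied in this form: note that $\{u,v\}$ being a non-separator does not prevent a size-4 bag equal to $\{a,b,u,v\}$ from existing in a small minimum-width decomposition, since its intersections with neighbouring bags could be $\{a,b\}$ or even $\{a\}$, and these genuinely can be separators when $a,b$ carry pendant attachments. In that event you are in Case~1 with a square whose non-separator property fails, and nothing in your sketch explains how a rung is then inserted using only width-3 bags.

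The paper resolves this hard case with entirely different, global tools rather than local decomposition surgery. First, it restricts attention to a biconnected component of $G$ (treewidth is the maximum over biconnected components). Biconnectivity plus non-disconnectivity then yield a dichotomy for the two unbuffered cornerpoints: either (i) each of them has its own path, avoiding the rest of the ladder, to the buffered end --- in which case even the squares containing them cannot be separators, the Lemma~\ref{lem:main2} contradiction applies everywhere, and Cases~1 and~2 are impossible outright; or (ii) only one such path exists, and then either the other unbuffered cornerpoint has degree 2, or the rung joining the two unbuffered cornerpoints is a clique separator, in which case splitting $G$ along that $K_2$ (treewidth equals the maximum over the two pieces, which glue back along a clique) reduces again to the degree-2 case. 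The degree-2 case is finished by a separate explicit construction (Lemma~\ref{lem:pointed} in the appendix): suppress the degree-2 cornerpoint to create a triangle, locate a bag containing that triangle, and attach pendant size-4 bags to insert arbitrarily many rungs. Your proposal contains none of these ingredients --- biconnectivity, the path dichotomy, the clique-separator decomposition, and the degree-2 cornerpoint lemma --- and they are exactly what makes the problematic end tractable; so the gap you flagged is real and unfilled.
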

\begin{proof}
As usual, if $tw(G) \geq 4$ we can use Theorem \ref{thm:main},
and if $L$ is disconnecting then we are done thanks to Lemma \ref{lem:disconnecting}.
So, let $G$ be a graph with ladder $L$ with four squares that is not disconnecting. From Observation \ref{obs:k4} we have $tw(G)=3$. 
Let $a',a,u, w,c$ be the vertices on the top of the ladder and $b', b,v,x,d $ be the vertices on the bottom. Our goal as usual is to show that adding a square does not increase the treewidth of $G$.
We henceforth assume that $G$ is (vertex) biconnected. This is because the treewidth of a graph is the maximum treewidth ranging over all biconnected components of the graph. The ladder, both before and after lengthening, belongs to a single biconnected component. So we henceforth focus only on that
component.

 As in the proof of Theorem \ref{thm:main} we focus on the three squares defined by vertices $a,u,w,c$ and $b,v,x,d$. We have an extra square on the left side - $a', a, b', b$ - and this has the same `buffer' role as in the proof of Lemma \ref{lem:main2}. However, there is no extra buffer square on the right of the ladder, and this causes some mild complications.

We begin with several observations. Whenever, in the proof of Theorem \ref{thm:main}, Case 1 or Case 2 are shown to apply to four vertices from $a,u,w, b,v,x$ (i.a. avoiding $c$ and $d$) then we are already done\footnote{When applying Case 2 in such a context, we leave $c$ and $d$ alone i.e. we only contract two squares of the ladder, not three.}. That is because, using exactly the same argument as in Lemma \ref{lem:main2}, those four vertices or a subset thereof cannot (after contraction, when Case 2 applies) be a separator, so there must be an extra vertex in the bag: $tw(G) > 3$, and a contradiction on the assumption $tw(G)=3$ is obtained.  The absence of a separator is due to the vertices $a', b'$, the edges $\{a',b'\}, \{a',a\}, \{b',b\}$ and the fact that we avoid vertices $c, d$, allowing them to assume the same role as $c', d'$ in the proof of Lemma \ref{lem:main2}. Hence, the main headache is when Case 1 or Case 2 is applied to four vertices involving $c$ and/or $d$. The problem is that, due to not having any knowledge about the non-ladder neighbours of $c$ and $d$, we cannot guarantee that the four vertices (or a subset thereof) do not form a separator. Hence, it is not possible to directly derive a contradiction on $tw(G)=3$. In such situations the size-5 bags introduced by Case 1 and Case 2 might, therefore, inflate the treewidth. However, it is possible to circumvent this, as we shall see.


The fact that $G$ is biconnected and $L$ is not disconnecting means that at least one of the following holds:
\begin{itemize}
    \item There is a simple path that starts at $c$, avoids all other vertices on the ladder (in particular: $d$), and ends at $a'$ or $b'$;
    \item There is a simple path that starts at $d$, avoids all other vertices on the ladder (in particular: $c$), and ends at $a'$ or $b'$;
\end{itemize}
(Note that it is permitted that these paths intersect, perhaps multiple times, at vertices distinct from $c$ and $d$).
Now, suppose \emph{both} these paths exist.  In this situation we are done, because Cases 1 and Case 2 can be applied in their unconstrained form i.e. they do not even need to avoid $c$ and $d$. This is because, after contracting the three squares to one, the single square remaining (or a subset thereof) cannot be a separator. This is because $c$ and $d$ are independently of each other connected to the other side of the ladder (and as observed above the square $a', a', b, b'$ basically has the same separator-preventing function at the other end of the ladder). Hence, the proof of Theorem 2 goes through essentially unchanged, the only difference being that Case 1 and Case 2 now generate contradictions on the assumption $tw(G)=3$. 

Hence, we assume that only one such path exists. For now, suppose this path starts at $d$. Due to biconnectivity, and the absence of the second path, there are exactly two possibilities:
\begin{enumerate}
    \item $c$ has degree 2, in which case its only neighbours are $d$ and $w$.
    \item The edge $\{c,d\}$ is a separator.
\end{enumerate}

If $\{c,d\}$ is a separator, then deleting this edge splits $G$ into $G_1$ and $G_2$, where $G_1$ is the connected component containing the ladder.  The treewidth of $G$ is equal to
the maximum of $tw(G_1 \cup \{c,d\})$ and $tw(G_2 \cup \{c,d\})$. This is because $\{c,d\}$ is a clique separator. In
particular, any tree decomposition of $G_1 \cup \{c,d\}$ (respectively, $G_2 \cup \{c,d\}$) must have $\{c,d\}$
 together in some bag, so any two such tree decompositions can be linked
 together via a single extra bag containing $\{c,d\}$ in order to obtain a tree decomposition for $G$. Now, $G_2 \cup \{c,d\}$ (and thus its treewidth) is unchanged if the ladder is extended, so we can focus on the graph $G_1 \cup \{c,d\}$. This brings us back into the situation that $c$ has degree 2. (An exactly symmetrical argument holds if the path had started at $c$.)
 
 Hence, at this point we can assume without loss of generality
 that exactly one of $c$ and $d$ has degree 2. We can invoke Lemma \ref{lem:pointed} and we are done.
\end{proof}
\sknov{\section{Tightness}}
\label{sec:tightness}
\noindent
The constant 4 in the statement of Theorem \ref{thm:main3} is equal to the constant obtained for the `bottleneck' case $tw(G)=3$. An improved constant 3 for this case is not possible, as Fig. \ref{fig:prism} shows. 

\begin{figure}
\centering
\includegraphics[scale=0.8,page=1]{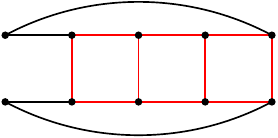}
\caption{A graph of treewidth 3 that contains a ladder with 3 squares, shown in red. Increasing the length of the ladder by 1 square increases the treewidth to 4.
}
\label{fig:prism} 
\end{figure}

\noindent
From Theorem \ref{thm:main} we know that if $tw(G) \geq 4$ we can start from ladders of length~3. It is therefore natural to ask whether, 
if the treewidth of $G$ is sufficiently high, we can start from ladders of length 2 rather than 3. 
This is not possible, as we now show.

\begin{lemma}
\label{lem:alwaystight}
For every $t \geq 3$ there exists a graph $G_2(t)$ that contains a ladder of length 2, has treewidth exactly $t$ and such that if we increase the length of the ladder by one, 
creating $G_3(t)$, the treewidth increases to exactly $t+1$.
\end{lemma}

\begin{proof}   
We will prove this by showing that $G_2(t)$ \steven{has}
treewidth at most $t$ and that $G_3(t)$
\steven{has}
treewidth at least $t+1$. \steven{When} combined with Lemma 3 \steven{this} gives us the desired result. 
   
We construct $G_2(t)$ as follows. \steven{First, we introduce vertices $1,2,3,4,5,6$ and the following edges which construct the (maximal) ladder of length 2:
\[
    \{ \{1,2\}, \{2,3\}, \{4,5\}, \{5,6\}, \{1,4\}, \{2,5\}, \{3,6\} \}\text{.}
\]}
In addition to the ladder we have a clique containing $(t-1)$ vertices $7,8, \ldots, 7+t-2$.
%
%
\steven{Given that $t \geq 3$ we thus always have a non-trivial clique that will contain at least 2 vertices, i.e. vertices 7 and 8.} 
The ladder is further connected to the clique as shown in Fig.~\ref{fig:tightstuff}, and described as follows: 
\begin{itemize}
    \item Vertices 1 and 3 are connected to every vertex of the clique, and
    \item Vertices 4 and 6 are connected to every vertex of the clique except vertex 7. 
\end{itemize}

\begin{figure}[t]
\begin{center}
\begin{tikzpicture}[scale = 0.8]
    
    \node (G) at (-3,0) {$G_2(t)$};
    
    \vertex [label=above:$1$] (1) at (0,1) {};
    \vertex [label=right:$2$] (2) at (0,0) {};
    \vertex [label=below:$3$] (3) at (0,-1) {};
    
    \vertex [label=left:$4$] (4) at (-1,1) {};
    \vertex [label=left:$5$] (5) at (-1,0) {};
    \vertex [label=left:$6$] (6) at (-1,-1) {};

    \vertex [label=below:$7$] (7) at (2,0) {};
    \vertex [label=below right:$8$] (8) at (3,0) {};
    \vertex [label=right:$7+(t-2)$] (t) at (5,0) {};
    
    \draw [line width = 1pt]
    (1) edge (3)
    (1) edge (4)
    (2) edge (5)
    (3) edge (6)
    (4) edge (6)
    (7) edge (1)
    (7) edge (3)
    (7) edge (8)
    (8) edge[dotted] (t)
    (1) edge[bend left=70] (8)
    (1) edge[bend left=70] (t)
    (4) edge[bend left=70] (8)
    (4) edge[bend left=70] (t)
    (3) edge[bend right=70] (8)
    (3) edge[bend right=70] (t)
    (6) edge[bend right=70] (8)
    (6) edge[bend right=70] (t);

    \draw [thick, decorate, decoration={brace, mirror, amplitude=5pt}] (t) -- (8) node [black, midway, yshift=10pt] {$C$};
    
\end{tikzpicture}
\caption{Graph $G_2(t)$ for $t \geq 3$. Vertices $7,8,\ldots,7+(t-2)$ form a clique; the edges of the clique are not depicted for simplicity. The graph has a ladder of length 2 and treewidth exactly~$t$.}
\label{fig:tightstuff} 
\end{center}
\end{figure}
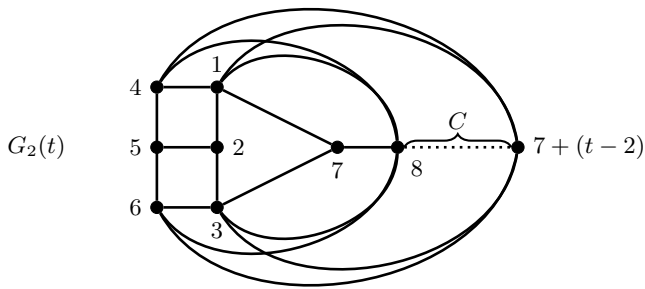
%
Now we will \steven{prove} that $G_2(t)$
\steven{has} treewidth at most $t$. We do this by giving a tree decomposition that has no bag with more than $t+1$ vertices. We define clique $C$ as the set of vertices in the aforementioned clique except vertex $7$, i.e. $\{8, \ldots, 7+t-2\}$. Note that $|C| = (7+t-2)-8+1 = t-2$. The tree decomposition has the following bags and edges:
\begin{enumerate}
    \item[bag 1:] $\{1,3,7\} \cup C $,
    \item[bag 2:] $\{1,3,5\} \cup C $ connected to bag 1,
    \item[bag 3:] $\{1,4,5\} \cup C $ connected to bag 2,
    \item[bag 4:] $\{3,5,6\} \cup C $ connected to bag 2,
    \item[bag 5:] $\{1,2,3,5\}$ connected to bag 2.
\end{enumerate}

Now we can simply check that these 5 bags form a valid tree decomposition. We check each of the three properties of a tree decomposition in turn (see the preliminaries).
\begin{description}
    \item[Property (tw1).] 
    The union of bags 1, 3, 4 and 5 is equal to the entire vertex set of $G_2(t)$, so clearly every vertex of $G_2(t)$ is in at least one bag.
    \item[Property (tw2).] 
    Every edge in the clique \steven{is} covered by bag 1. The edges of the ladder are covered by bags 3, 4 and 5. The remaining edges connecting the ladder to the clique are covered by bags 1, 3
    and
    \steven{4}. So this property is also satisfied. 
    \item[Property (tw3).] 
    Clearly satisfied.
\end{description}
Every \steven{bag}
contains \steven{at most} $t+1$ elements so the treewidth of $G_2(t)$ is less than or equal to $t$.

Now we move to $G_3(t)$ and show that it has treewidth \steven{at least} $t+1$. Let vertex $l$ and $r$ be the two vertices that we add to increase the length of the ladder, see Fig \ref{fig:g3}. 
We prove this lower bound by constructing a bramble with a \sknov{minimum} hitting set that has $t+2$ elements. From the result of Seymour and Thomas \cite{seymour1993graph} this means that $G_3(t)$ has treewidth at least $t+1$.\\

\begin{figure}[t]
\begin{center}
\begin{tikzpicture}[scale = 0.8]
    
    \node (G) at (-3,-0.5) {$G_3(t)$};
    
    \vertex [label=right:$1$] (1) at (0,1) {};
    \vertex [label=right:$2$] (2) at (0,0) {};
    \vertex [label=right:$3$] (3) at (0,-2) {};
    
    \vertex [label=left:$4$] (4) at (-1,1) {};
    \vertex [label=left:$5$] (5) at (-1,0) {};
    \vertex [label=left:$6$] (6) at (-1,-2) {};

    \vertex [label=below:$7$] (7) at (2,-0.5) {};
    \vertex [label=below right:$8$] (8) at (3,-0.5) {};
    \vertex [label=right:$7+(t-2)$] (t) at (5,-0.5) {};

    \vertex [label={[text=red]left:$l$}, red] (l) at (-1,-1) {};
    \vertex [label={[text=red]right:$r$}, red] (r) at (0,-1) {};
    
    \draw [line width = 1pt]
    (1) edge (3)
    (1) edge (4)
    (2) edge (5)
    (3) edge (6)
    (4) edge (6)
    (7) edge (1)
    (7) edge (3)
    (7) edge (8)
    (l) edge[dashed, red] (r)
    (8) edge[dotted] (t)
    (1) edge[bend left=70] (8)
    (1) edge[bend left=70] (t)
    (4) edge[bend left=70] (8)
    (4) edge[bend left=70] (t)
    (3) edge[bend right=70] (8)
    (3) edge[bend right=70] (t)
    (6) edge[bend right=80] (8)
    (6) edge[bend right=80] (t);

    \draw [thick, decorate, decoration={brace, mirror, amplitude=5pt}] (t) -- (8) node [black, midway, yshift=10pt] {$C$};
    
\end{tikzpicture}
\caption{The graph $G_3(t)$, $t \geq 3$, constructed by increasing the length of the ladder in $G_2(t)$ by one. The new edge is $\{l,r\}$. A bramble argument is used to show the treewidth of $G_3(t)$ is a least $t+1$.}
\label{fig:g3}
\end{center}
\end{figure}
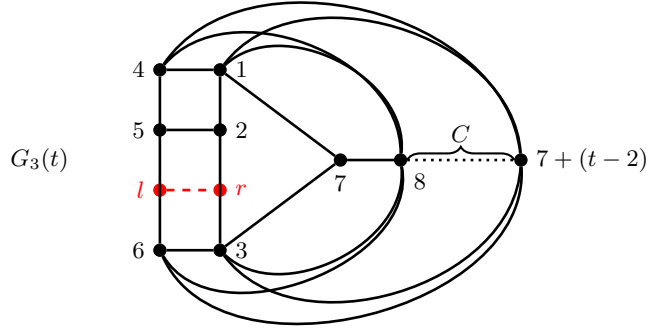

A \emph{bramble} of $G_3(t)$ is a set of connected subgraphs of $G_3(t)$ such that each subgraph in this set touches all others. Two subgraphs \emph{touch} each other if they have a vertex in common or there is an edge in the graph that connects the two subgraphs.\\

We construct the \steven{following} set of $|C|+5$ subgraphs $B$, show that $B$ is indeed a bramble, and then describe the size of a minimum hitting set of $B$. We describe each subgraph by a subset of vertices, understanding that we are taking the subgraph induced by this subset:
\begin{itemize}
    \item Every vertex from clique $C$ is a singleton subgraph
    \item Vertex 1 is a singleton subgraph
    \item $\{4,5\}$
    \item $\{2,r,l,6\}$
    \item $\{2,r,3,7\}$
    \item $\{l,6,3,7\}$
\end{itemize}

First we verify that $B$ is a valid bramble. It is easy to check that each subgraph is connected. We now show that the subgraphs are pairwise-touching:
\begin{itemize}
   \item Every singleton in $C$ \steven{is} connected to all other singletons (including $\{1\}$) and \steven{is} connected to vertices 4,6 and 7, therefore touching each other set.
   \item $\{1\}$ touches $\{4,5\}$ via edge $\{4,1\}$, touches $\{2,r,l,6\}$ via edge $\{1,2\}$ and touches $\{2,r,3,7\}$ and $\{l,6,3,7\}$ via edge $\{1,7\}$.
   \item $\{4,5\}$ touches the non-singleton sets via the edges $\{5,2\}$ and $\{5,l\}$.
   \item $\{2,r,l,6\}$ touches $\{2,r,3,7\}$ and $\{l,6,3,7\}$ by sharing vertices 2 and 6.
   \item $\{2,r,3,7\}$ touches $\{l,6,3,7\}$ by sharing vertex 7.
   \item $\{l,6,3,7\}$ using the same arguments as above.
\end{itemize}

So $B$ is indeed a bramble. Now we construct a \sknov{minimum} hitting set $H$. Clearly every singleton of $B$ must be in $H$. There are $t-1$ singletons in $B$, namely the $t-2$ constructed from $C$ and $\{1\}$. They do not share any of their vertices with another subgraph in the bramble. The same is true for $\{4,5\}$ so we add an arbitrary vertex from $\{4,5\}$, say vertex 4, to $H$. The remaining sets $\{2,r,l,6\}$, $\{2,r,3,7\}$ and $\{l,6,3,7\}$ can be hit by vertices 6 and 7, which is optimal given that they do not have a single element in common. $H$ has $t+2$ elements. So we conclude that the treewidth of $G_3(t)$ is at least $t+1$ and we are done.

\end{proof}

\medskip

\section{Resolving an open problem from phylogenetics}

\sknov{The research in this article was originally inspired by a question arising in phylogenetics, a subfield of bioinformatics. Namely: does the common chain reduction rule on two unrooted, binary
phylogenetic trees preserve the treewidth of the display graph? In this section we use the results from this article to answer this
affirmatively, and show that due to the restricted structure of display graphs slightly stronger bounds can be obtained than on general graphs. We start with some background and definitions.}

\subsection{The subtree and chain reduction rules are treewidth-preserving in the display graph}
 An (unrooted, binary) phylogenetic tree on a set of discrete labels $X$
 representing a set of species, is an undirected, connected, binary tree whose leaves are bijectively labelled by $X$. Due to this bijection we often refer to leaves and labels interchangeably. Two phylogenetic trees $T_1, T_2$, both on $X$, are defined to be equal if there is an isomorphism from one to the other that preserves the labels $X$. 

The \emph{display graph}
$D=D(T_1, T_2)$ of two unrooted binary phylogenetic trees $T_1, T_2$ on $X$ is obtained by identifying leaves with the same label. Display graphs have been quite intensively studied in recent years, see e.g. \cite{bryant2006compatibility,kelk2016monadic,fernandez2018compatibility,janssen2018treewidth,van2022embedding}. We note that the assumption $T_1 \neq T_2$ guarantees that $|X| \geq 4$ and that the display graph contains a $K_4$ minor, and thus has treewidth at least 3. For convenience we thus henceforth assume that $T_1 \neq T_2$. This is a very reasonable assumption because it is easy to check in polynomial time whether two phylogenetic trees are equal (equivalently, that the display graph has treewidth at most 2). The fact that the display graph has treewidth at least 3 is
useful because it allows us to suppress degree-2 nodes in the display graph without altering the treewidth or worrying about the whole display graph vanishing. In particular, we can suppress the degree-2 nodes that are created in the formation of the display graph when vertices with the same leaf label are identified.

The \emph{subtree reduction} is a data reduction rule very often used to simplify phylogenetic trees when computing a dissimilarity measure between them. Let $x,y$ be distinct labels in $X$. If $x,y$ have a common parent in $T_1$ and a common parent in $T_2$, then the \emph{cherry reduction} deletes the leaves $x$ and $y$ from both trees and assigns label $xy$ to the parent. The subtree reduction is simply when the cherry reduction is applied to exhaustion.

It was shown in \cite{kelk2017treewidth} that if one applies the subtree reduction rule to $T_1, T_2$ to
obtain new trees $T'_1, T'_2$ then $tw(D(T'_1, T'_2)) = tw(D(T_1, T_2))$. The question arose whether another frequently encountered data reduction rule,  the \emph{common chain reduction} rule, is also treewidth-preserving in the display graph. The definition of a common chain is rather technical\footnote{For $n\geq 2$, let $C = (\ell_1,\ell_2\ldots,\ell_n)$ be a sequence of distinct taxa in $X$.  
We call $C$ an $n$-chain of $T$ if there exists a walk $p_{\ell_1},p_{\ell_2},\ldots,p_{\ell_n}$ in $T$ and the elements in $p_{\ell_2},p_{\ell_3},\ldots,p_{\ell_{n-1}}$ are all pairwise distinct. Note that $\ell_1$ and $\ell_2$ may have a common parent or $\ell_{n-1}$ and $\ell_n$ may have a common parent. Furthermore, if  $p_{\ell_1} = p_{\ell_2}$ or $p_{\ell_{n-1}} = p_{\ell_n}$ holds, then $C$ is said to be {\it pendant} in $T$. If a chain $C$ exists in both phylogenetic trees $T_1$ and $T_2$ on $X$, we say that $C$ is a {\it common chain} of $T_1$ and $T_2$.}, but in essence it is an uninterrupted sequence of leaves that exist in the same order in both trees. The main nuance is that the first two leaves in the sequence, and the last two leaves in the sequence, might be unordered in one or both trees. The common chain reduction rule simply reduces common chains to length $k$, for some given constant $k$. It is well known that reduction to length 3 preserves a number of commonly encountered phylogenetic dissimilarity measures. Is there a constant $k$ such that the common chain reduction rule preserves the treewidth of the display graph?

Theorem \ref{thm:main3} shows that such a $k$ definitely exists. Specifically, common chains with $k$ leaves induce ladders
in the display graph with $k-1$ squares. Hence, if we reduce common chains to length 5, we know that the corresponding ladder in the display graph is reduced to length 4, and thus (via Theorem \ref{thm:main3}) that the treewidth is preserved. The result can be summarized as follows:

\begin{lemma}
\label{lem:preserve1}
Let $T_1, T_2$ be two unrooted binary phylogenetic trees on the same set of taxa $X$, where $|X| \geq 4$ and $T_1 \neq T_2$. Then exhaustive application of the subtree reduction and the common chain reduction (where common chains are reduced to 5 leaf labels) does not alter the treewidth of the display graph.
\end{lemma}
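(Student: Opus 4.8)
The plan is to show that a single application of either reduction rule preserves the treewidth of the display graph $D$; exhaustiveness then follows at once, since a finite composition of treewidth-preserving operations is treewidth-preserving. The subtree (cherry) reduction was already shown to preserve treewidth in \cite{kelk2017treewidth}, so the whole burden falls on the common chain reduction, and specifically on recasting it as a statement about ladders so that Theorem \ref{thm:main3} applies.

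First I would make the chain-to-ladder correspondence precise. Fix a common chain $C = (\ell_1, \ldots, \ell_k)$ with $k \geq 5$. In each of $T_1$ and $T_2$ the interior leaves $\ell_2, \ldots, \ell_{k-1}$ hang off a path of degree-$3$ internal nodes; identifying the shared leaves and then suppressing the resulting degree-$2$ vertices (legitimate because $T_1 \neq T_2$ forces $tw(D) \geq 3$) turns the two internal paths into the two rails of a ladder and each shared leaf into a rung. I would verify that the interior of this structure induces only itself — each interior rail vertex keeps exactly its two rail-neighbours and its single rung, matching the grid degree pattern — and that the only edges leaving the structure emanate from its four extremal vertices, which become the cornerpoints. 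The endpoint bookkeeping (whether $\ell_1,\ell_2$ or $\ell_{k-1},\ell_k$ share a parent, i.e., whether the chain is pendant) affects only how the cornerpoints attach to the rest of $D$, not the isolation of the ladder. Counting rungs gives $k$ columns, hence a $2 \times k$ grid, that is, a ladder of length $k-1$.

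With the correspondence in hand, reducing $C$ from $k$ down to $5$ leaves corresponds exactly to shortening its ladder from length $k-1 \geq 4$ to length $4$. Let $D'$ denote the display graph after this reduction. Then $D'$ contains a ladder of length $4$, and by Theorem \ref{thm:main3} this ladder can be lengthened arbitrarily without changing $tw(D')$; lengthening it back to length $k-1$ reconstructs $D$ (the non-ladder part of the graph is untouched), so $tw(D) = tw(D')$. The same conclusion follows for any single-leaf reduction step, so long as the chain is never taken below $5$ leaves — equivalently, the ladder never drops below length $4$, the threshold of Theorem \ref{thm:main3}.

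The main obstacle is the second step: rigorously checking that a common chain always induces a genuinely \emph{isolated} ladder — one inducing only itself and meeting the rest of $D$ solely at its four cornerpoints — across the pendant and non-pendant endpoint configurations allowed by the definition of a common chain, together with confirming the exact square count $k-1$. A secondary, more bookkeeping-level subtlety is the interplay of the two rules under exhaustive application: I would maintain the invariant that whenever a chain reduction fires the object reduced is a genuine common chain of length $\geq 5$, so the ladder argument applies verbatim, while cherry reductions, being independently treewidth-preserving, leave this invariant intact.
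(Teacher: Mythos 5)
Your proposal is correct and takes essentially the same route as the paper: the correspondence between a common chain on $k$ leaves and a ladder of length $k-1$ in the display graph, followed by an application of Theorem~\ref{thm:main3} to re-lengthen the ladder, with the subtree reduction covered by the earlier result of \cite{kelk2017treewidth}. The pendant-endpoint bookkeeping you identify as the main obstacle is precisely what the paper handles by (optionally) leaving degree-2 vertices unsuppressed or introducing subdivisions, which is treewidth-neutral, so your extra care there matches rather than diverges from the paper's argument.
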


The subtree and chain reductions are the centrepiece of many kernelization results in phylogenetics \cite{bulteau2019parameterized}. Now we have established that
these two reduction rules \emph{also} preserve treewidth in the display graph. We note that, when trying to compute phylogenetic distance measures or parameters by exploiting low treewidth in the display graph, this treewidth-preserving result does not help: it is actually more advantageous if the treewidth decreases. Yet, if we are using the treewidth of the display graph as a proxy for phylogenetic dissimilarity, as proposed in \cite{kelk2017treewidth}, these results show that these two reduction rules are safe.

\subsection{One step further: leveraging the restricted structure of display graphs}

\sknov{Display graphs are a restricted subclass of graphs},
so it is natural to ask whether it is treewidth-preserving to reduce common chains to 4 or perhaps even fewer labels (rather than the 5 labels stated in  Lemma \ref{lem:preserve1}). We note, by leveraging an example from \cite{kelk2017treewidth}, that truncation to 3 leaf labels (inducing the shortening of ladders to 2 squares in the display graph) is \emph{not} treewidth preserving.
If we take the display graph of the two phylogenetic trees shown in Fig. \ref{fig:chains} (far left), which have a common chain of length 3 on the leaf labels $\{a,b,c\}$, we get a ladder in the display graph with 2 squares. The display graph has treewidth 3. However, if we take the display graph of the two trees shown in Fig. \ref{fig:chains} (second from right), where the chain has been increased to length 4 (and the ladder thus to 3 squares), the treewidth of the display graph increases to 4.

The question thus arises whether truncation to 4 leaf labels is safe. It turns out that it is! With a little more effort we obtain
the following theorem.
\begin{theorem}
\label{thm:preserve2}
Let $T_1, T_2$ be two unrooted binary phylogenetic trees on the same set of taxa $X$, where $|X| \geq 4$ and $T_1 \neq T_2$. Then exhaustive application of the subtree reduction and the common chain reduction (where common chains are reduced to 4 leaf labels) does not alter the treewidth of the display graph. This is best possible, because
there exist tree pairs where truncation of common chains to length 3 does reduce the treewidth of the display graph (see Fig \ref{fig:chains}).
\end{theorem}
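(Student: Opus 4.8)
\textbf{Proof proposal for Theorem~\ref{thm:preserve2}.}

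The plan is to split the statement into its positive part (truncation to $4$ leaf labels preserves treewidth) and its tightness part (truncation to $3$ labels can strictly decrease treewidth), and to handle each by translating between the phylogenetic picture and the ladder picture already developed for general graphs. First I would make precise how a common chain of length $k$ manifests in the display graph: identifying the shared leaves $x_1,\dots,x_k$ of $T_1$ and $T_2$ produces exactly a ladder-like grid, where the two ``rails'' come from the paths through $T_1$ and $T_2$ and the ``rungs'' come from the identified taxa. I would verify that a common chain on $k$ leaves yields a ladder whose length is $k-1$ (so that $4$ leaves correspond to a ladder of length~$3$), and that this ladder is genuinely a ladder in the sense of the paper: it induces only itself and attaches to the rest of the display graph only through its four cornerpoints. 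The cornerpoints are precisely the four attachment vertices where the chain connects back into the two trees.

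Having set up this dictionary, the positive direction becomes a matter of invoking the earlier machinery. Exhaustive subtree reduction is already known to be treewidth-preserving on display graphs~\cite{kelk2017treewidth}, so the only new content is the chain reduction. Here I would argue that reducing a common chain to $4$ leaf labels is exactly reducing the corresponding ladder to length~$3$. The subtlety is that Theorem~\ref{thm:main3} only guarantees safety of reduction down to length~$4$ on general graphs; to get the stronger bound of length~$3$ I would exploit the extra structure of display graphs. Concretely, I expect to show that in a display graph the ladder arising from a common chain is never disconnecting in the ``cheap'' way, but more importantly that the cornerpoints have a controlled neighbourhood: each cornerpoint leads into a tree, which supplies precisely the kind of flanking connectivity that the general proof (see the $tw(G)=3$ analysis in the proof of Lemma~\ref{lem:main2} and Theorem~\ref{thm:main3}) had to work hard to manufacture. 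The key step is therefore to show that this tree-structure guarantees either enough connectivity at both chain-ends to simulate a flanking square, or a low-degree cornerpoint enabling a direct suppression argument, so that the length-$3$ reduction is safe where on general graphs only length~$4$ could be guaranteed.

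For tightness I would exhibit an explicit pair $T_1,T_2$ (as in Fig.~\ref{fig:chains}) whose display graph has a common chain such that the full display graph has treewidth, say, $t$, but truncating that chain to $3$ leaves drops the treewidth below $t$. The verification splits into two computations: a lower bound showing the untruncated (or length-$4$) display graph has treewidth $t$, which I would obtain by exhibiting an appropriate minor (a $K_t$-ish or grid minor, mirroring the $K_4$-minor argument of Observation~\ref{obs:k4}), and an upper bound showing the truncated display graph admits a width-$(t-1)$ tree decomposition, which I would give by explicit construction of its bags. Finally I would address the boundary hypotheses $|X|\geq 4$ and $T_1\neq T_2$, checking that after exhaustive subtree reduction the two trees share no cherries and that any remaining common chains are well-defined, so that the reduction rules terminate and the resulting display graph is the one whose treewidth we certify.

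The main obstacle I anticipate is the gap between the general-graph bound of length~$4$ (Theorem~\ref{thm:main3}) and the claimed phylogenetic bound of length~$3$: pushing the reduction one square further requires genuinely using that the cornerpoints embed into binary trees rather than attaching to arbitrary graph structure. I expect the crux to be a careful case analysis at the two ends of the chain — essentially re-running the delicate interior/reeling-in arguments of the proof of Theorem~\ref{thm:main}, but now able to conclude the stronger bound because the tree neighbourhoods rule out the pathological configurations (the ``snakes'') that forced the weaker constant in the general setting.
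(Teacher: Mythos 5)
Your overall skeleton matches the paper's: translate the 4-leaf chain into a length-3 ladder, dispose of the cases $tw(D)\geq 4$ (Theorem~\ref{thm:main}) and disconnecting $L$ (Lemma~\ref{lem:disconnecting}), and in the remaining case $tw(D)=3$ establish either a degree-2 cornerpoint (so Lemma~\ref{lem:pointed} applies) or flanking structure that simulates the buffer square of Theorem~\ref{thm:main3}; the tightness claim is settled by the explicit example of Fig.~\ref{fig:chains}, exactly as in the paper. However, the crux of the positive direction is left as a declaration of intent (``the key step is therefore to show\dots'', ``I expect the crux to be a careful case analysis''), and the heuristic you offer for why it should work --- that ``each cornerpoint leads into a tree, which supplies precisely the kind of flanking connectivity'' --- is not the actual mechanism and would not survive scrutiny. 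A cornerpoint of a \emph{pendant} chain also leads into a tree, yet supplies no flanking connectivity whatsoever (that case must instead be handled by the degree-2 argument); tree-ness of the two halves, by itself, creates no buffer.

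What actually closes the gap in the paper is an argument about how the taxa outside the chain are distributed over its two sides \emph{in the two trees jointly}. Since $L$ is not disconnecting, some taxon $x\notin\{a,b,c,d\}$ lies on the $a$-side of the chain in $T_1$ and on the $d$-side in $T_2$. If the chain is pendant in either tree, a degree-2 cornerpoint arises and Lemma~\ref{lem:pointed} finishes. Otherwise some taxon $y\neq x$ lies on the $d$-side in $T_1$, and the decisive step is this: if $y$ were on the $a$-side in $T_2$ (crossing in the direction opposite to $x$), the display graph would contain the minor shown in Fig.~\ref{fig:chains} (far right) and hence have treewidth at least 4, contradicting $tw(D)=3$. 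Therefore $y$ lies on the $d$-side in \emph{both} trees, and the two tree-paths from the right end of the chain to $y$ close up into a cycle flanking that end of the ladder (Fig.~\ref{fig:buffercycle}); it is this cycle --- not the mere tree structure --- that plays the separator-blocking role of the buffer square, after which the proof of Theorem~\ref{thm:main3} goes through. Without the minor-based exclusion of oppositely-crossing taxa you cannot conclude that the flanking cycle exists, so your dichotomy remains unproven: the proposal identifies the right target but is missing the idea that actually establishes it.
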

\begin{proof}
We prove this by showing that if $T_1$ and $T_2$ have a common chain $C$ with 4 leaves $(a,b,c,d)$, and that this chain is then made longer to obtain new trees $T'_1$ and $T'_2$, we have $tw(D) = tw(D')$ where $D=D(T_1,T_2)$ and $D'=D(T'_1,T'_2)$. Note firstly that the chain $C$ induces a ladder $L$ with 3 squares in $D$\footnote{To remain consistent with the formal definition of a ladder - in particular to ensure that it has four cornerpoints - it might be necessary to leave some degree 2 vertices in the display graph unsuppressed, or even to introduce degree 2 vertices via subdivision.  However, as discussed in the preliminaries this will not alter the treewidth.}. Hence, if $tw(D) \geq 4$ we can use Theorem \ref{thm:main} and we are done. Similarly, if $L$ is disconnecting then we are done via Lemma \ref{lem:disconnecting}. Hence, we can assume that $L$ is not disconnecting, and $tw(D)<4$, so $tw(D)=3$. 

Now, we argue that there must exist a leaf label $x \not \in \{a,b,c,d\}$ such that in one of the two trees, say $T_1$, $x$ is on the ``$a$'' side of the chain in the tree, and in the other $T_2$ on the  ``$d$'' side of the tree. If this was not so then in $D$ there would be no path from the left of the ladder to the right that does not pass through the chain i.e. $L$ is disconnecting, contradiction. Now, suppose that $C$ is pendant in $T_1$ and/or $T_2$; recall that a chain is \emph{pendant} if two of its outermost leaves have a common parent (and this occurs if and only if the tree has no other leaf labels on that side of the chain). This induces at least one degree-2 cornerpoint in $L$, so by Lemma \ref{lem:pointed} we are done. Hence, assume that $C$ is pendant in neither tree. This means $|X| \geq 6$ because each tree needs at least one leaf label on both sides of the chain. Observe that if there is a leaf label $y \not \in \{a,b,c,d,x\}$ such that in $T_1$ $y$ is on the $d$ side of the chain and in $T_2$ on the $a$ side of the chain, then $tw(D) \geq 4$, yielding a contradiction. To see that it has $tw(D) \geq 4$ observe that the graph shown in Fig. \ref{fig:chains} (far right) will be a minor of it. So, let $y \not \in \{a,b,c,d,x\}$ be any taxon that
is on the $d$ side of the chain in $T_1$; such a taxon must exist by virtue of the assumption that the chain is pendant in neither tree. Given that $y$ cannot be on the $a$ side of the chain in $T_2$, it must - like $x$ - thus be on the $d$ side of the chain in $T_2$. But then we are in the situation as shown in Fig. \ref{fig:buffercycle}. The fact that the right end of the ladder is part of the yellow-highlighted cycle, has exactly the same separator-blocking effect as the buffer square $\{a',b',a,b\}$ used in Theorem \ref{thm:main3}. Hence, the same proof can be used as that theorem, and we are done.
\end{proof}

\begin{figure}
\centering
\includegraphics{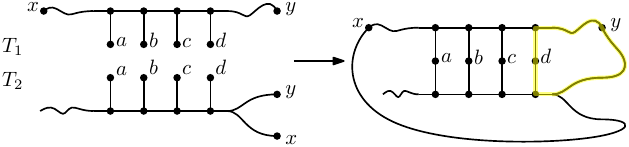}
\caption{If $T_1$ and $T_2$ have the shown structure, the 3 squares of the ladder induced in the display graph are flanked on the right by the yellow-highlighted cycle. This functions like the separator-blocking extra square in the proof of Theorem \ref{thm:main3}. This is one of the reason why chain-induced ladders can be safely reduced to 3 squares in display graphs (Theorem \ref{thm:preserve2}), rather than 4 as in general graphs.}
\label{fig:buffercycle} 
\end{figure}

\begin{figure}
\centering
\includegraphics[scale=0.8]{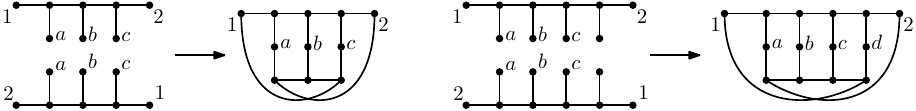}
\caption{Lengthening the common chain $\{a,b,c\}$ to $\{a,b,c,d\}$ in these phylogenetic trees causes the treewidth of the display graph to increase. Equivalently: shortening common chains to 3 leaf labels is not guaranteed to preserve treewidth in the display graph.}
\label{fig:chains} 
\end{figure}

\section{Discussion and future work}

\sknov{First, we note that the results in this article can be made constructive, in the following sense. 
Given an \emph{arbitrary} minimum-width tree decomposition for a graph with a short ladder, such that we know from the results of this article that the ladder can be safely extended without increasing the treewidth, we can transform it in polynomial time into a minimum-width tree decomposition for the graph after the ladder has been lengthened. 
This is because our proofs are based on many steps of the form ``inspect the tree decomposition and if necessary modify it in some simple way''. 
The only part of the proof that requires some non-trivial adjustment is the assumption, used in the proof of Theorem \ref{thm:main}, that we have access to a \emph{distance-minimizing} tree decomposition i.e. one that minimizes the number of bags between $B_1$ and $B_2$. 
This assumption is used to trigger a contradiction at several places within the proof: by showing \sknov{how to construct} a tree decomposition with smaller distance.
Fortunately, this can be made constructive by restarting the proof with the new tree decomposition that has smaller distance. 
Eventually this process stops and the proof completes without triggering the contradiction, i.e. constructively.}

\sknov{Second, we remark that our proof technique applies to structures slightly more general than ladders. 
For example, for ladders in which every square has at most one chord added. 
To see why, note that Case 1 of Theorem \ref{thm:main} concerns the situation when the tree decomposition induces a 4-clique in one of the squares of the ladder, and Case 2 reduces down to Case 1. 
In such a case, we have enough freedom to arbitrarily insert squares into the ladder with 0 or 1 chords in each square, and 2 if the starting treewidth is high enough. 
All other cases of the theorem concern ``reeling in the snakes'' such that a vertical rung of the ladder is in one bag and another vertical rung is in an adjacent bag. 
At this point a new path of bags is inserted between these two bags, allowing new vertical rungs to be inserted into the graph. 
This path of bags necessarily induces one chord per square, so can be used to create arbitrarily long ladders with zero or one chord per square.}

Third, \sknov{and related to our previous point}, can our results be (elegantly) generalized to \sknov{significantly} more complex recursive, \sknov{low-pathwidth} structures than ladders? If our results are generalized to more general structures, the \sknov{algorithmic} question arises of how to implement these results as reduction rules: this requires efficient algorithmic recognition of ``long'' structures in order to produce the ``short'' variant. \sknov{Already for ladders the question arises of how far one can improve upon the trivial $O(n^4)$-time algorithm for identifying such a structure, obtained by simply guessing the endpoints of the ladder.}

\sknov{Finally, we note that display graphs, which originally inspired this research, can also be constructed from three or more trees \cite{bryant2006compatibility}. The question thus arises of what type of recursive, ladder-like structures are induced in the display graph by common chains in three or more phylogenetic trees, and whether the results in this article can be easily extended to this more general situation.}

\medskip

\noindent\textbf{Acknowledgements.} We thank Hans Bodlaender and Bart Jansen  for insightful feedback. We also thank the members of our department for useful discussions.
%


{
\bibliographystyle{splncs04}
\bibliography{afterALMOB}
}


\end{document}